\theoremstyle{plain}
\newtheorem{theorem}{Theorem}
\newtheorem{proposition}{Proposition}
\newtheorem{corollary}{Corollary}
\newtheorem{lemma}{Lemma}
\theoremstyle{definition}
\newtheorem{definition}{Definition}
\newtheorem{remark}{Remark}
\newcommand{\rn}{\mathbb R}
\newcommand{\sn}{\mathbb S}
\newcommand{\la}{\langle}
\newcommand{\ra}{\rangle}
\DeclareMathOperator{\ricci}{Ricci}
\DeclareMathOperator{\grad}{grad}
\DeclareMathOperator{\riem}{Riem}
\DeclareMathOperator{\tr}{trace}
\DeclareMathOperator{\di}{div}
\DeclareMathOperator{\hess}{Hess}
\begin{document}

\title{Biharmonic surfaces of constant mean curvature}

\author{E. Loubeau}
\address{D{\'e}partement de Math{\'e}matiques \\
Universit{\'e} de Bretagne Occidentale \\
6, avenue Victor Le Gorgeu \\
CS 93837, 29238 Brest Cedex 3, France}
\email{Eric.Loubeau@univ-brest.fr}

\author{C. Oniciuc}
\address{Faculty of Mathematics, Al.I. Cuza University of Iasi, Bd. Carol I no. 11, 700506
Iasi, Romania}
\email{oniciucc@uaic.ro}

\keywords{Biharmonic map; constant mean curvature; stress-energy tensor}
\subjclass{53C42, 53C43, 58E20}
\thanks{C. Oniciuc was supported by a grant of the Romanian National Authority for Scientific Research, CNCS-UEFISCDI, project number PN-II-RU-TE-2011-3-0108}

\begin{abstract}
We compute a Simons' type formula for the stress-energy tensor of biharmonic maps from surfaces. Specializing to Riemannian immersions, we prove several rigidity results for biharmonic CMC surfaces, putting in evidence the influence of the Gaussian curvature on pseudo-umbilicity. Finally, the condition of biharmonicity is shown to enable an extension of the classical Hopf theorem to CMC surfaces in any ambient Riemannian manifold.
\end{abstract}

\maketitle

\section{Introduction}

While harmonic maps between abstract Riemannian manifolds are a generalization of minimal submanifolds, their study on two-dimensional domains remained nonetheless very valuable and brought new light to both theories. When, for topological or geometrical reasons, harmonic maps are non-existent or unsatisfactory, one can then measure the failure of harmonicity with the {\em bienergy functional}
$$E_2 (\phi) = \frac{1}{2} \int_{M} |\tau(\phi)|^2 \, v_g ,$$
where $M$ is compact, $\phi : (M,g) \to (N,h)$ is a smooth map and $\tau(\phi) = \tr \nabla d\phi$ is the tension field. 
Usual arguments (cf.~\cite{Jiang1}) show that critical points of $E_2$, called {\em biharmonic maps}, are solutions of 
$$\tau_{2} (\phi) = \tr (\nabla^{\phi})^2 \tau(\phi) - \tr R^{N} (d\phi(.),\tau(\phi)) d\phi(.) =0 ,$$
and we will use the adjective {\em proper} to designate non-harmonic biharmonic maps.

Whilst the interconnections between harmonic maps and minimal surfaces are clear and well established, in many cases, but not always, biharmonic Riemannian immersions have constant mean curvature (CMC). However, this articulation is not as clear as harmonicity and minimality, and the principal objective of this article is to explain how biharmonicity constrains CMC surfaces in an abstract ambient manifold. This is particularly well illustrated on compact biharmonic CMC surfaces whose Gaussian curvature has constant sign. They must be flat or pseudo-umbilical, if $K^M$ is non-negative (Corollary~\ref{cor3}), otherwise have pseudo-umbilical points (Theorem~\ref{thm2}). The role of pseudo-umbilical points in relaxing curvature constraints is further felt in the non-compact case, as their absence forces the CMC surface to be conformally flat 
(Theorem~\ref{thm2}). 

For complete surfaces, non-negative Gaussian curvature and an upper bound on the sectional curvature of the ambient space will cause the surface to be flat or pseudo-umbilical, but note that both can occur simultaneously (Proposition~\ref{prop3}). When the ambient manifold is a three-dimensional space form, the surface must be umbilical (Corollary~\ref{cor4}), confer~\cite{MO} for the classification.

Our approach is to derive, in Proposition~\ref{prop1}, a Simons' type formula for the biharmonic stress-energy tensor, valid for all biharmonic maps. As cumbersome as this equation is in the general case, on surfaces it simplifies enough (Proposition~\ref{prop2}) to enable the use of a divergence argument (Theorem~\ref{thm1}) and draw some consequences for biharmonic maps on a two-dimensional domain (Corollary~\ref{cor1} and~\ref{cor2}). However, the main consequences are for CMC surfaces.

To close the article, we show that, in any ambient space, the condition of biharmonicity preserves the holomorphicity of the Hopf differential of CMC surfaces (Theorem~\ref{thm3}).

Biharmonic CMC surfaces were also studied in~\cite{Fetcu,OZ} and \cite{S}.

The conventions we adopt are that the Riemann curvature tensor is
$$R(X,Y) = [\nabla_X, \nabla_Y ] - \nabla_{[X,Y]} ,$$
while its $(0,4)$ counterpart is
$$R(X,Y,Z,W) = \la R(X,Y)W,Z\ra .$$
The choice of sign for the Laplacians on sections and functions is the same and for functions on the real line $\Delta f = - f''$.

All objects, unless specified, are smooth and we assume summation on repeated indices, when apt.

\section{The biharmonic stress-energy tensor on surfaces}

Since biharmonic maps stem from a variational problem, one can apply the general principle of studying the same functional but under variations of the domain metric. This idea taken up on the bienergy leads to the biharmonic stress-energy tensor, which is symmetric and of type $(0,2)$ (\cite{LMO}).
\begin{definition}
Let $(M,g)$ and $(N,h)$ be Riemannian manifolds and $\phi : M \to N$ a smooth map.
The biharmonic stress-energy tensor of $\phi$ is
$$S_2(X,Y) = \left\{ \frac{|\tau(\phi)|^2}{2} + \la d\phi , \nabla\tau(\phi)\ra \right\} g(X,Y) - T(X,Y) ,$$
where
$$T(X,Y) = \la d\phi (X) , \nabla_{Y}\tau(\phi)\ra + \la d\phi(Y) , \nabla_{X}\tau(\phi)\ra .$$
\end{definition}

The main feature of $S_2$ is to satisfy Hilbert's principle of being divergence free at critical points i.e. (\cite{LMO,Jiang2}):
$$ \di S_2 = \la d\phi, \tau_2(\phi)\ra.$$

In order to exploit the biharmonicity of the map $\phi$, we compute the rough Laplacian of its biharmonic stress-energy tensor. This second order operator on $(0,2)$-tensors will reveal curvature terms which combine with the bitension field and formulas will involve swapping vector positions in the third fundamental form of $\phi$, with curvature appearing accordingly to a lemma we quote separately, without proof.

\begin{lemma}
\label{eq:thirdfundamentalform}
Let $\phi:(M^{m},g)\to (N^{n},h)$ be a smooth map. Then
\begin{eqnarray*}
(\nabla^{2}d\phi)(X,Y,Z) - (\nabla^{2}d\phi)(Z,Y,X)=R(X,Z)d\phi(Y) -d\phi(R^{M}(X,Z)Y),
\end{eqnarray*}
for any $X,Y,Z\in C(TM)$.
\end{lemma}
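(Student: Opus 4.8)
The plan is to recognize the stated formula as the Ricci (Weitzenböck) commutation identity for second covariant derivatives, applied to the differential $d\phi$ viewed as a section of the bundle $E = T^*M \otimes \phi^{-1}TN$, equipped with the connection induced by the Levi-Civita connection of $(M,g)$ on $T^*M$ and the pullback connection $\nabla^{\phi}$ on $\phi^{-1}TN$. With the convention that $X$ and $Z$ are the two differentiation directions and $Y$ the argument on which the differential acts, one writes $(\nabla^2 d\phi)(X,Y,Z) = (\nabla^2_{X,Z}\, d\phi)(Y)$, where $\nabla^2_{X,Z} = \nabla_X \nabla_Z - \nabla_{\nabla_X Z}$ is the usual second covariant derivative on $E$ (here $\nabla_X Z$ is taken for the Levi-Civita connection of $M$).

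First I would expand both $(\nabla^2 d\phi)(X,Y,Z)$ and $(\nabla^2 d\phi)(Z,Y,X)$ with this definition and subtract. Since the connection of $M$ is torsion-free, the two correction terms combine into $-\nabla_{[X,Z]}\, d\phi$, so the difference reduces to $(\nabla_X \nabla_Z - \nabla_Z \nabla_X - \nabla_{[X,Z]})\, d\phi$ evaluated on $Y$, that is, to the action on $d\phi$ of the curvature operator $R^E(X,Z)$ of the bundle $E$, evaluated at $Y$. To keep the bookkeeping transparent I would alternatively fix a point $p$ and extend $X,Y,Z$ to local fields that are autoparallel at $p$, so that all correction terms and brackets vanish at $p$ and only the plain commutator $\nabla_X \nabla_Z - \nabla_Z \nabla_X$ survives.

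The core step is then to split the curvature of $E$ by the Leibniz rule, $R^E(X,Z) = R^{T^*M}(X,Z) \otimes \Id + \Id \otimes R^{\phi^{-1}TN}(X,Z)$, and to evaluate each piece on $Y$. The factor coming from the pullback bundle gives $R^{\phi^{-1}TN}(X,Z)\, d\phi(Y) = R^N(d\phi(X), d\phi(Z))\, d\phi(Y)$, which is precisely the term $R(X,Z)\, d\phi(Y)$ appearing in the statement. The factor coming from the cotangent bundle, through the dual connection, contributes $-\,d\phi\bigl(R^M(X,Z) Y\bigr)$, the minus sign being exactly the one produced by dualizing the Levi-Civita connection and to be tracked against the convention $R(X,Y) = [\nabla_X, \nabla_Y] - \nabla_{[X,Y]}$ fixed in the introduction. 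Summing the two contributions yields the asserted identity.

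The main obstacle is not conceptual but one of careful sign accounting: getting the dual-bundle curvature to deliver $-d\phi(R^M(X,Z)Y)$ rather than its negative, and confirming that the curvature of the pullback connection is identified with $R^N(d\phi(X), d\phi(Z))$ under the paper's conventions. Once the normal-frame simplification removes the $\nabla_{\nabla_X Z}$ corrections, the remainder is a routine application of the Leibniz rule for curvature on a tensor product, so the only genuine care required is in the signs, which is doubtless why the statement is quoted here without proof.
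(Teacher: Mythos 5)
The paper states this lemma explicitly \emph{without proof} (``a lemma we quote separately, without proof''), so there is no in-paper argument to compare against; judged on its own, your proof is correct and is the standard one. Reading $(\nabla^{2}d\phi)(X,Y,Z)=(\nabla^{2}_{X,Z}d\phi)(Y)$ with $\nabla^{2}_{X,Z}=\nabla_X\nabla_Z-\nabla_{\nabla_X Z}$ on $E=T^{*}M\otimes\phi^{-1}TN$, the torsion-free combination of the correction terms into $-\nabla_{[X,Z]}d\phi$, the tensor-product splitting of $R^{E}(X,Z)$, the identification $R^{\phi^{-1}TN}(X,Z)=R^{N}(d\phi(X),d\phi(Z))$, and the dual-connection sign $(R^{T^{*}M}(X,Z)\omega)(Y)=-\omega(R^{M}(X,Z)Y)$ all check out under the paper's convention $R(X,Y)=[\nabla_X,\nabla_Y]-\nabla_{[X,Y]}$, and they deliver exactly the stated right-hand side. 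One small point worth making explicit: your slot convention differs from the equally common reading $(\nabla^{2}d\phi)(X,Y,Z)=(\nabla_X\nabla d\phi)(Y,Z)$, and the two are reconciled precisely by the total symmetry of $\nabla d\phi$ in its two arguments (again torsion-freeness); under the latter reading, the lemma's exchange of the first and third slots reduces to your curvature commutator only after invoking that symmetry, so it deserves a line in the write-up rather than being absorbed silently into the choice of convention. This is also consistent with how the paper actually uses the lemma in the proof of Proposition~\ref{prop1}, where it swaps the outer differentiation direction with an argument of the second fundamental form.
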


\begin{proposition}[The rough Laplacian of $S_2$] \label{prop1}
Let $(M,g)$ and $(N,h)$ be Riemannian manifolds and $\phi : M \to N$ a biharmonic map, then the (rough) Laplacian of $S_2$ is the symmetric $(0,2)$-tensor
\begin{align*}
&(\Delta^{R} S_2)(X,Y) = \Big( 2\la \Delta \tau(\phi), \tau(\phi)\ra - 2 |\nabla \tau(\phi)|^2 -2 \sum \la R(X_i,X_j)d\phi(X_i), \nabla_{X_j}\tau(\phi)\ra \\
& -2 \la d\phi(\ricci^{M}(.)),\nabla_{.}\tau(\phi)\ra -2\la \nabla d\phi , \nabla^{2} \tau(\phi) \ra + 2 \la d\phi(.) , \nabla_{.}(\Delta \tau(\phi))\ra  \Big) g(X,Y)\\
& + 2 \la \nabla_{X}\tau(\phi) , \nabla_{Y}\tau(\phi)\ra + \sum \la R(X_i,X)d\phi(X_i) , \nabla_{Y}\tau(\phi)\ra +\sum  \la R(X_i,Y)d\phi(X_i), \nabla_{X} \tau(\phi)\ra \\
&+ \la d\phi(\ricci^{M}(X)), \nabla_{Y}\tau(\phi)\ra + \la d\phi(\ricci^{M}(Y)), \nabla_{X}\tau(\phi)\ra \\
& + 2\sum  \la \nabla d\phi(X_i,X),(\nabla^2 \tau(\phi))(X_i,Y)\ra + 2\sum  \la \nabla d\phi(X_i,Y),(\nabla^2 \tau(\phi))(X_i,X)\ra \\
& -\la d\phi(X),\nabla_{Y}(\Delta (\tau(\phi)))\ra - \la d\phi(Y),\nabla_{X}(\Delta (\tau(\phi)))\ra 
+ \sum \la d\phi(X), R(X_i , Y)\nabla_{X_i}\tau(\phi)\ra \\
&+\sum  \la d\phi(Y), R(X_i , X)\nabla_{X_i}\tau(\phi)\ra 
+ \sum \la d\phi(X), \nabla_{X_i} R(X_i ,Y)\tau(\phi)\ra \\
&+\sum  \la d\phi(Y), \nabla_{X_i} R(X_i ,X)\tau(\phi)\ra 
+ \la d\phi(X), \nabla_{\ricci^{M}(Y)}\tau(\phi)\ra + \la d\phi(Y), \nabla_{\ricci^{M}(X)}\tau(\phi)\ra ,
\end{align*}
where $\{X_i\}$ is a geodesic frame around the point $p\in M$.
\end{proposition}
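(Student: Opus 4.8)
The plan is to exploit the additive structure $S_2 = f\, g - T$, where $f = \tfrac12 |\tau(\phi)|^2 + \langle d\phi, \nabla\tau(\phi)\rangle$ is a function and $T$ is the symmetric $(0,2)$-tensor of the definition. Since the metric $g$ is parallel, the rough Laplacian splits as $\Delta^{R} S_2 = (\Delta f)\, g - \Delta^{R} T$, so that $\Delta f$ accounts for the entire coefficient of $g(X,Y)$ (as $\Delta^{R}T$ produces no term proportional to $g(X,Y)$) while $-\Delta^{R}T$ supplies the remaining symmetric terms. Throughout I would fix a point $p$, work in the geodesic frame $\{X_i\}$ at $p$, extend $X,Y$ with vanishing covariant derivative at $p$, and evaluate all traces at $p$; this lets me replace iterated frame derivatives by covariant second and third derivatives at the cost of curvature corrections.

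For $\Delta f$ I would treat the two summands separately. The Bochner identity for $\tfrac12|\tau(\phi)|^2$ gives $\langle \Delta\tau(\phi),\tau(\phi)\rangle - |\nabla\tau(\phi)|^2$. For $\langle d\phi,\nabla\tau(\phi)\rangle$, since the contraction commutes with $\nabla$ and $g$ is parallel, applying $-\tr\nabla^2$ yields three blocks: $\langle\tr\nabla^2 d\phi,\nabla\tau(\phi)\rangle$, a symmetric cross term $2\langle\nabla d\phi,\nabla^2\tau(\phi)\rangle$, and $\langle d\phi,\tr\nabla^2\nabla\tau(\phi)\rangle$. The first block is not yet a tension field because the contracted slots sit in the wrong positions; here I would invoke Lemma~\ref{eq:thirdfundamentalform} to move them, which recovers $\nabla\tau(\phi)$ through $\tau(\phi) = \tr\nabla d\phi$ and generates the pullback-curvature term $\sum\langle R(X_i,\cdot)d\phi(X_i),\cdot\rangle$ together with a domain term from the contraction $\sum_i R^{M}(X_i,\cdot)X_i = -\ricci^{M}(\cdot)$. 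The third block requires commuting the two trace derivatives past the free one, which by the Ricci identity for sections converts $\tr\nabla^2\nabla\tau(\phi)$ into $\nabla_{\cdot}\Delta\tau(\phi)$ modulo curvature.

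For $\Delta^{R}T$ I would expand $\nabla^2_{X_i,X_i}$ of the summand $\langle d\phi(X),\nabla_{Y}\tau(\phi)\rangle$ and then symmetrize in $X,Y$. Differentiating twice produces a third-fundamental-form trace $\langle\sum_i(\nabla^2 d\phi)(X_i,X_i,X),\nabla_{Y}\tau(\phi)\rangle$, cross terms $\langle\nabla d\phi(X_i,X),(\nabla^2\tau(\phi))(X_i,Y)\rangle$, and a term $\langle d\phi(X),\sum_i(\nabla^3\tau(\phi))(X_i,X_i,Y)\rangle$. The trace is again resolved by Lemma~\ref{eq:thirdfundamentalform}, giving $\langle\nabla_X\tau(\phi),\nabla_Y\tau(\phi)\rangle$, the curvature term $\sum\langle R(X_i,X)d\phi(X_i),\nabla_Y\tau(\phi)\rangle$ and $\langle d\phi(\ricci^{M}(X)),\nabla_Y\tau(\phi)\rangle$. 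The last term is the delicate one: commuting the first and third slots of $\nabla^3\tau(\phi)$ must proceed in two steps — first swapping the antisymmetric pair of $\nabla^2\tau(\phi)$, which after one more differentiation produces both $R(X_i,Y)\nabla_{X_i}\tau(\phi)$ and $(\nabla_{X_i}R)(X_i,Y)\tau(\phi)$, and then commuting the remaining two slots by the Ricci identity, which yields $\nabla_{Y}\Delta\tau(\phi)$ and the domain term $\nabla_{\ricci^{M}(Y)}\tau(\phi)$.

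Finally I would collect the $g(X,Y)$-coefficient from $\Delta f$ and the symmetric terms from $-\Delta^{R}T$, and impose biharmonicity: the computation carried out for a general map leaves bitension-field contributions, and setting $\tau_2(\phi)=0$ is what reduces the expression to the displayed closed form. The main obstacle is the bookkeeping of the double commutation in $\tr\nabla^2\nabla\tau(\phi)$ and in $\sum_i(\nabla^3\tau(\phi))(X_i,X_i,Y)$: one must generate, with the correct signs and multiplicities, the pullback curvature $R$ acting on $\nabla\tau(\phi)$, its covariant derivative $\nabla R$ acting on $\tau(\phi)$, and the domain contractions $\nabla_{\ricci^{M}(\cdot)}\tau(\phi)$, all stemming from the repeated use of the Ricci identity and of Lemma~\ref{eq:thirdfundamentalform}, keeping in mind the sign in $\sum_i R^{M}(X_i,\cdot)X_i = -\ricci^{M}(\cdot)$. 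Matching the resulting numerous scalar curvature contractions against the compact coefficient of $g(X,Y)$ — using the symmetries and the second Bianchi identity of $R=R^{N}\circ\phi$ — requires care, but involves no idea beyond the systematic reordering of covariant derivatives.
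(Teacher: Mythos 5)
Your plan reproduces the paper's own proof: the authors likewise split $S_2 = f\,g - T$ with $f=\tfrac12|\tau(\phi)|^2+\la d\phi,\nabla\tau(\phi)\ra$, compute $\Delta f$ in a geodesic frame via the Bochner identity together with Lemma~\ref{eq:thirdfundamentalform} and Ricci-identity commutations (producing the pullback curvature terms, the $\nabla_{X_i}R(X_i,\cdot)\tau(\phi)$ terms and the $\nabla_{\ricci^{M}(\cdot)}\tau(\phi)$ contractions with the same sign bookkeeping you describe), and handle $\Delta^{R}T$ by exactly the two-step commutation of $\sum_i(\nabla^3\tau(\phi))(X_i,X_i,Y)$ that you outline. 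Biharmonicity also enters where you say it does — the contracted $\nabla R$ trace is rewritten through $\tau_2(\phi)=0$ into $\Delta\tau(\phi)$ terms, which is what doubles the coefficients in the $g(X,Y)$ part — so your proposal is correct and follows essentially the same route as the paper.
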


\begin{proof}
Let $\phi : (M,g) \to (N,h)$ a biharmonic map between Riemannian manifolds.
We will work with a geodesic frame $\{X_i\}$ around the point $p\in M$ and evaluate at the point $p$.

Writing out the Laplacian in the geodesic frame, yields
\begin{align*}
&\Delta(\la d\phi , \nabla\tau(\phi)\ra) \\
&= -\sum \Big\{\la \nabla_{X_i}[(\nabla d\phi)(X_i,X_j)],\nabla_{X_j}\tau(\phi)\ra + 2\la (\nabla d\phi)(X_i,X_j), (\nabla^{2}\tau(\phi))(X_i,X_j)\ra\\
&+ \la d\phi(X_j) , \nabla_{X_i}\nabla_{X_i}\nabla_{X_j} \tau(\phi)\ra - \la d\phi(X_j),\nabla_{X_i}\nabla_{\nabla_{X_i}{X_j}} \tau(\phi)\ra\Big\} ,
\end{align*}
and by the symmetry formula of the third fundamental form, we have
\begin{align*}
\sum \nabla_{X_i}[(\nabla d\phi)(X_i,X_j)] 
&= \nabla_{X_j}\tau(\phi) + \sum R(X_i,X_j)d\phi(X_i) - d\phi(\ricci^{M}(X_j))
\end{align*}
and
\begin{align*}
&\sum  \Big(\nabla_{X_i}\nabla_{X_i}\nabla_{X_j} \tau(\phi) - \nabla_{X_i}\nabla_{\nabla_{X_i}{X_j}} \tau(\phi) \Big) \\
&= \sum  \Big\{\nabla_{X_j}\nabla_{X_i}\nabla_{X_i} \tau(\phi) + \nabla_{[X_i,X_j]}\nabla_{X_i}\tau(\phi) + R(X_i,X_j)\nabla_{X_i}\tau(\phi) + \nabla_{X_i}R(X_i,X_j)\tau(\phi)\\
&-\Big( \nabla_{\nabla_{X_j}X_i}\nabla_{X_i}\tau(\phi) + \nabla_{[X_i,\nabla_{X_j}X_i]}\tau(\phi) + R(X_i,\nabla_{X_j}X_i)\tau(\phi)\Big)\Big\}\\
&= -\nabla_{X_j}\Big(\Delta\tau(\phi)\Big) + \sum  \Big\{(\nabla^{2}\tau(\phi))(X_j,\nabla_{X_i}X_i)+ R(X_i,X_j)\nabla_{X_i}\tau(\phi) \\
&+ \nabla_{X_i}R(X_i,X_j)\tau(\phi)\Big\} + \nabla_{\ricci^{M}(X_j)}\tau(\phi) ,
\end{align*}
since
\begin{align*}
\sum  [X_i,\nabla_{X_j}X_i] &= \sum  \nabla_{X_j}\nabla_{X_i}X_i + \ricci^{M}(X_j) .
\end{align*}
Therefore
\begin{align*}
&\Delta\Big( \la d\phi,\nabla\tau(\phi)\ra\Big) = -\sum  \la \nabla_{X_j}\tau(\phi),\nabla_{X_j}\tau(\phi)\ra - \sum \la R(X_i,X_j)d\phi(X_i), \nabla_{X_j}\tau(\phi)\ra \\
&- \sum  \la d\phi(\ricci^{M}(X_j)), \nabla_{X_j}\tau(\phi)\ra - 2 \la \nabla d\phi, \nabla^{2}\tau(\phi)\ra +\sum   \la d\phi(X_j),\nabla_{X_j}\Delta\tau(\phi)\ra \\
&-\sum \Big(\la d\phi(X_j),R(X_i,X_j)\nabla_{X_i}\tau(\phi)\ra + \la d\phi(X_j),\nabla_{X_i}R(X_i,X_j)\tau(\phi)\ra \\
&+ \la d\phi(X_j),\nabla_{\ricci^{M}(X_j)}\tau(\phi)\ra\Big) ,
\end{align*}
and as $\phi$ is biharmonic
\begin{align*}
\sum \la d\phi(X_j) ,\nabla_{X_i}R(&X_i,X_j)\tau(\phi)\ra \\
&= \sum X_i ( R^{N}(d\phi(X_i),d\phi(X_j),d\phi(X_j),\tau(\phi))) - \la \nabla d\phi ,R(.,.)\tau(\phi)\ra \\
&= \sum X_i \la R^{N}(d\phi(X_j),\tau(\phi))d\phi(X_j),d\phi(X_i)\ra\\
&= - \sum  \la \nabla_{X_i}\Delta \tau(\phi) , d\phi(X_i)\ra - \la \Delta \tau(\phi) , \tau(\phi)\ra ,
\end{align*}
while
\begin{align*}
\la\Delta\tau(\phi),\tau(\phi)\ra &= \sum   R^{N}(d\phi(X_i),\tau(\phi),d\phi(X_i),\tau(\phi)) ,
\end{align*}
\begin{align*}
\la R(X_i,X_j)d\phi(X_i), \nabla_{X_j}\tau(\phi)\ra &= \la R(X_i,X_j) \nabla_{X_i}\tau(\phi),d\phi(X_j)\ra ,
\end{align*}
and
\begin{align*}
\sum \la d\phi(\ricci^{M}(.)),\nabla\tau(\phi)\ra &= \sum \la d\phi(.),\nabla_{\ricci^{M}(.)}\tau(\phi)\ra ,
\end{align*}
so the Laplacian of the scalar term is
\begin{align*}
&\Delta\Big( \frac{|\tau(\phi)|^2}{2} + \la d\phi , \nabla\tau(\phi)\ra\Big) \\
&= 2 \la\Delta\tau(\phi),\tau(\phi)\ra - 2 |\nabla\tau(\phi)|^2 
-2 \sum  \la R(X_i,X_j)d\phi(X_i), \nabla_{X_j}\tau(\phi)\ra \\
&-2 \la d\phi(\ricci^{M}(.)),\nabla_{.}\tau(\phi)\ra 
-2 \la \nabla d\phi, \nabla^{2}\tau(\phi)\ra + 2 \la  d\phi(.), \nabla_{.}(\Delta\tau(\phi))\ra .
\end{align*}

On the other hand, to compute the (rough) Laplacian of the symmetric two-tensor 
\begin{align*}
T(X,Y) &= \la d\phi(X),\nabla_{Y}\tau(\phi)\ra + \la d\phi(Y),\nabla_{X}\tau(\phi)\ra ,
\end{align*}
we put $X=X_k$ and $Y=X_j$ and obtain, still evaluating expressions at the point $p$
\begin{align*}
&-(\Delta^{R}T)(X,Y) =\sum  \Big( \la \nabla_{X_i}\nabla_{X_i}d\phi(X),\nabla_{Y}\tau(\phi)\ra + 2\la \nabla_{X_i}d\phi(X),\nabla_{X_i}\nabla_{Y}\tau(\phi)\ra \\
&+ \la d\phi(X),\nabla_{X_i}\nabla_{X_i}\nabla_{Y}\tau(\phi)\ra + \la \nabla_{X_i}\nabla_{X_i}d\phi(Y),\nabla_{X}\tau(\phi)\ra 
+ 2\la \nabla_{X_i}d\phi(Y),\nabla_{X_i}\nabla_{X}\tau(\phi)\ra \\
&+ \la d\phi(Y),\nabla_{X_i}\nabla_{X_i}\nabla_{X}\tau(\phi)\ra-\la d\phi(Y),\nabla_{X_i}\nabla_{\nabla_{X_i}X}\tau(\phi)\ra - \la d\phi(X),\nabla_{X_i}\nabla_{\nabla_{X_i}Y}\tau(\phi)\ra \Big),
\end{align*}
since $\nabla_{X_i}\nabla_{X_i} X_j$ vanishes at the point $p$. This last expression simplifies further if we use the symmetries properties of the third fundamental form of $\phi$ to obtain
\begin{align*}
\sum \nabla_{X_i}\nabla_{X_i}d\phi(X) &=\sum (\nabla^{2} d\phi)(X_i,X_i,X)\\
&= \nabla_{X}\tau(\phi) + \sum R(X_i ,X)d\phi(X_i) - d\phi(\ricci^{M}(X)) ,
\end{align*}
and the curvature tensor of the pull-back bundle for
\begin{align*}
\sum \Big( \la d\phi(X),\nabla_{X_i}\nabla_{X_i}\nabla_{Y}&\tau(\phi)\ra - \la d\phi(X),\nabla_{X_i}\nabla_{\nabla_{X_i}Y}\tau(\phi)\ra \Big) \\
=&-\la d\phi(X), \nabla_{Y}(\Delta\tau(\phi)) \ra + \sum \la d\phi(X), R(X_i ,Y)\nabla_{X_i}\tau(\phi)\ra \\
&+ \sum \la d\phi(X),\nabla_{X_i}R(X_i ,Y)\tau(\phi)\ra + \la d\phi(X),\nabla_{\ricci^{M}(Y)}\tau(\phi)\ra .
\end{align*}
The Laplacian of the tensor $T$ is then equal to
\begin{align*}
-(\Delta T)(X,Y) &= \la \nabla_{X}\tau(\phi),\nabla_{Y}\tau(\phi)\ra + \sum \la R(X_i ,X)d\phi(X_i),\nabla_{Y}\tau(\phi)\ra \\
&+ \la d\phi(\ricci^{M}(X)),\nabla_{Y}\tau(\phi)\ra 
+ 2 \sum \la \nabla d\phi(X_i,X), (\nabla^{2}\tau(\phi))(X_i,Y)\ra  \\
&- \la d\phi(X), \nabla_{Y}(\Delta\tau(\phi)) \ra+ \sum \la d\phi(X), R(X_i ,Y)\nabla_{X_i}\tau(\phi)\ra \\
&+ \sum \la d\phi(X),\nabla_{X_i}R(X_i ,Y)\tau(\phi)\ra + \la d\phi(X),\nabla_{\ricci^{M}(Y)}\tau(\phi)\ra\\
&+\la \nabla_{Y}\tau(\phi),\nabla_{X}\tau(\phi)\ra + \sum \la R(X_i ,Y)d\phi(X_i),\nabla_{X}\tau(\phi)\ra \\
&+ \la d\phi(\ricci^{M}(Y)),\nabla_{X}\tau(\phi)\ra + 2 \sum \la \nabla d\phi(X_i,Y), (\nabla^{2}\tau(\phi))(X_i,X)\ra \\
&- \la d\phi(Y), \nabla_{X}(\Delta\tau(\phi)) \ra + \sum \la d\phi(Y), R(X_i ,X)\nabla_{X_i}\tau(\phi)\ra \\
&+ \sum \la d\phi(Y),\nabla_{X_i}R(X_i ,X)\tau(\phi)\ra 
+ \la d\phi(Y),\nabla_{\ricci^{M}(X)}\tau(\phi)\ra ,
\end{align*}
but
\begin{align*}
&\sum \la \nabla_{X_i}R(X_i,Y)\tau(\phi) , d\phi(X)\ra = \sum \la \nabla_{X_i}R^{N}(d\phi(X),\tau(\phi))d\phi(Y),d\phi(X_i)\ra \\
&- R^{N}(d\phi(X),\tau(\phi),d\phi(Y),\tau(\phi)) - \sum \la R(X_i,Y)\tau(\phi) ,(\nabla d\phi)(X_i,X)\ra ,
\end{align*}
and, since $\phi$ is biharmonic,
\begin{align*}
&-\la d\phi(X),\nabla_{Y}(\Delta\tau(\phi))\ra = \sum \la \nabla_{Y} (R^{N}(d\phi(X_i),d\phi(X))d\phi(X_i)) ,\tau(\phi)\ra \\
&+ \sum \Big(\la R^{N}(d\phi(X_i),d\phi(X))d\phi(X_i) ,\nabla_{Y}\tau(\phi)\ra - \la (\nabla d\phi)(X,Y),R(X_i ,\tau(\phi))X_i \ra \Big).
\end{align*}
Summing the various parts together yields the proposition.
\end{proof}

While the general expression for the rough Laplacian of $S_2$, at first, seems unwieldy, in a manner reminiscent of its harmonic counterpart (cf.~\cite{BLO}), it becomes amenable when the domain is a surface. The final formula only involves three ingredients: the tensor $S_2$ itself, the Gaussian curvature and the norm of the tension field of the map. This paves the way for a series of propositions and corollaries, for both maps and Riemannian immersions, where topological and curvature conditions restrict the existence of biharmonic maps.
\begin{proposition}\label{prop2}
Let $\phi : (M^2,g) \to (N,h)$ be a biharmonic map defined on a surface $M^2$. The Laplacian of its biharmonic stress-energy tensor is
$$
\Delta^{R}S_2= - 2 K^{M}S_2 + \nabla d (|\tau(\phi)|^2) +\Big\{K^{M}|\tau(\phi)|^2 + \Delta|\tau(\phi)|^2\Big\}g ,
$$
where $K^M$ is the Gaussian curvature of $(M^2,g)$.
\end{proposition}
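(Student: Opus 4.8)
The plan is to feed the two--dimensional curvature identities into the general formula of Proposition~\ref{prop1} and then to recognise the output as a combination of $S_2$, the Gaussian curvature and $|\tau(\phi)|^2$. The only geometric input special to a surface is that the Ricci operator is a pointwise multiple of the identity, $\ricci^M=K^M\,\Id$ (so that the scalar curvature equals $2K^M$); everything else is a reorganisation of the terms of Proposition~\ref{prop1}, using the symmetries of the curvature, the symmetry rule for the third fundamental form of Lemma~\ref{eq:thirdfundamentalform}, and the biharmonicity of $\phi$.

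First I would isolate the terms that are already in final form. Working in a geodesic frame at $p$ one has $\Delta|\tau(\phi)|^2=2\la\Delta\tau(\phi),\tau(\phi)\ra-2|\nabla\tau(\phi)|^2$ and $\nabla d(|\tau(\phi)|^2)(X,Y)=2\la(\nabla^2\tau(\phi))(X,Y),\tau(\phi)\ra+2\la\nabla_X\tau(\phi),\nabla_Y\tau(\phi)\ra$; hence the block $2\la\Delta\tau(\phi),\tau(\phi)\ra-2|\nabla\tau(\phi)|^2$ of the $g(X,Y)$--coefficient is exactly $\Delta|\tau(\phi)|^2$, and the tensorial term $2\la\nabla_X\tau(\phi),\nabla_Y\tau(\phi)\ra$ is one of the two summands of the Hessian $\nabla d(|\tau(\phi)|^2)$. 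Substituting $\ricci^M=K^M\,\Id$ then turns the scalar Ricci term into $-2K^M\la d\phi,\nabla\tau(\phi)\ra$ and the four tensorial Ricci terms into $2K^M\,T$. Comparing with the definition of $S_2$ and adding and subtracting $K^M|\tau(\phi)|^2\,g$, these contributions assemble into $-2K^M S_2+\{K^M|\tau(\phi)|^2\}g$, the two copies of $K^M|\tau(\phi)|^2$ being precisely what produces the clean statement.

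What must then be disposed of is the whole family of terms carrying the pull--back curvature $R$ and its derivative $\nabla R$, together with the mixed terms $\la\nabla d\phi,\nabla^2\tau(\phi)\ra$ and $\la d\phi,\nabla(\Delta\tau(\phi))\ra$; since the right--hand side contains no explicit ambient curvature, all of these must cancel or be reabsorbed. In the $g(X,Y)$--coefficient the expectation is that $\la d\phi,\nabla(\Delta\tau(\phi))\ra$, rewritten by commuting covariant derivatives and using biharmonicity, reproduces exactly $\la\nabla d\phi,\nabla^2\tau(\phi)\ra$ and $\sum\la R(X_i,X_j)d\phi(X_i),\nabla_{X_j}\tau(\phi)\ra$, so that the three non--Ricci scalar terms annihilate one another; in the tensorial part the many $R(X_i,\cdot)$-- and $\nabla_{X_i}R(X_i,\cdot)$--terms, handled through the pair symmetry and skew--symmetry of $R^N$, the second Bianchi identity, and the biharmonic equation $\Delta\tau(\phi)=-\sum_i R^N(d\phi(X_i),\tau(\phi))d\phi(X_i)$, should collapse to the single remaining term $2\la(\nabla^2\tau(\phi))(X,Y),\tau(\phi)\ra$ that completes the Hessian. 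This curvature bookkeeping---matching each contraction with its partner and tracking signs carefully---is the genuine labour and the step most liable to conceal an error; a useful control is the trace, since $\tr_g S_2=|\tau(\phi)|^2$ and the asserted identity then contracts to the tautology $\Delta|\tau(\phi)|^2=\Delta|\tau(\phi)|^2$.
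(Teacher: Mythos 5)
Your setup is sound and coincides with the paper's: substitute $\ricci^M=K^M\,\Id$ into Proposition~\ref{prop1}, recognise $2\la \Delta \tau(\phi),\tau(\phi)\ra-2|\nabla\tau(\phi)|^2=\Delta|\tau(\phi)|^2$ and $2\la\nabla_X\tau(\phi),\nabla_Y\tau(\phi)\ra$ as one summand of the Hessian, and assemble the four tensorial Ricci terms plus the scalar one into $-2K^MS_2+K^M|\tau(\phi)|^2\,g$ by adding and subtracting $K^M|\tau(\phi)|^2g$; the observation $\tr_g S_2=|\tau(\phi)|^2$ is also correct. But the entire content of the proposition is the disposal of the remaining curvature and mixed-derivative terms, and this you do not carry out: you state it as an ``expectation''. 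Worse, the cancellation pattern you announce is demonstrably false, so the plan as written cannot be completed.

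Specifically, you claim that inside the $g$-coefficient the term $2\la d\phi,\nabla(\Delta\tau(\phi))\ra$ exactly reproduces $2\la\nabla d\phi,\nabla^2\tau(\phi)\ra+2\sum\la R(X_i,X_j)d\phi(X_i),\nabla_{X_j}\tau(\phi)\ra$, so that the three non-Ricci scalar terms cancel, and that the tensorial curvature terms collapse by themselves to $2\la(\nabla^2\tau(\phi))(X,Y),\tau(\phi)\ra$. Test this on the proper biharmonic immersion $\sn^2(1/\sqrt{2})\subset\sn^3$, where $B(X,Y)=g(X,Y)\eta$ for the unit normal $\eta$, so $\tau(\phi)=2\eta$, $\nabla_X\tau(\phi)=-2d\phi(X)$, $(\nabla^2\tau(\phi))(X,Y)=-2g(X,Y)\eta$, $\Delta\tau(\phi)=4\eta$, $\nabla_X(\Delta\tau(\phi))=-4d\phi(X)$ and $K^M=2$. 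A direct computation with $R^N(U,V)W=\la V,W\ra U-\la U,W\ra V$ gives
\begin{align*}
\sum\la R(X_i,X_j)d\phi(X_i),\nabla_{X_j}\tau(\phi)\ra&=4, &
\la\nabla d\phi,\nabla^2\tau(\phi)\ra&=-4, &
\sum\la d\phi(X_j),\nabla_{X_j}(\Delta\tau(\phi))\ra&=-8,
\end{align*}
so the three non-Ricci scalar terms of Proposition~\ref{prop1} sum to $-2(4)-2(-4)+2(-8)=-16\neq 0$; correspondingly the leftover tensorial terms sum to $+8\,g$ (the $\nabla_{X_i}R(X_i,\cdot)\tau(\phi)$ terms vanish on the sphere), not the predicted $2\la(\nabla^2\tau(\phi))(X,Y),\tau(\phi)\ra=-8\,g$. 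Only the sum of the two discrepancies vanishes, consistently with the proposition (here $S_2=2g$ is parallel and both sides equal $0$). This is precisely why the paper does not attempt a separate scalar and tensorial cancellation: it evaluates $(\Delta^RS_2)(X_1,X_2)$ and $(\Delta^RS_2)(X_1,X_1)$ directly in a geodesic frame, letting the $g$-coefficient mix with the tensorial family, and uses genuinely two-dimensional identities --- notably $\nabla d\phi(X_1,X_1)+\nabla d\phi(X_2,X_2)=\tau(\phi)$ --- to convert the $R^N$-terms created by biharmonicity back into $\nabla^2\tau(\phi)$-terms. Finally, your trace ``control'' cannot catch this: since contraction commutes with the rough Laplacian, the traced identity reduces to $\Delta|\tau(\phi)|^2=\Delta|\tau(\phi)|^2$ no matter what happens in the trace-free part, so it provides no check whatsoever on the step where your argument fails.
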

\begin{proof}
Since $\dim M=2$, its Ricci curvature is $\ricci^{M}=K^{M} I$, with $K^{M}\in C^{\infty}(M)$. We will work with a geodesic frame $\{ X_1,X_2\}$ around a point $p\in M^2$ and evaluate all expressions at this point.

As $\Delta^{R} S_2$ is a symmetric $(0,2)$-tensor, there are only two cases to consider and from the previous proposition, combined with  basic symmetries of the curvature tensor and the biharmonicity condition, we have
\begin{align*}
&(\Delta^{R} S_2)(X_1,X_2) =2\la\nabla_{X_1} \tau(\phi),\nabla_{X_2} \tau(\phi)\ra - \la d\phi(X_2) , \nabla_{X_1}(\Delta \tau(\phi))\ra\\
&+ 2K^{M}\Big\{\la d\phi(X_1), \nabla_{X_2}\tau(\phi)\ra  + \la d\phi(X_2), \nabla_{X_1}\tau(\phi)\ra\Big\}\\
&+ 2\la \nabla d\phi(X_1,X_2) , - \Delta\tau(\phi)\ra + 2\la \nabla d\phi(X_1,X_1) , (\nabla^{2} \tau(\phi))(X_1,X_2) \ra \\
&+ 2\la \nabla d\phi(X_2,X_2) , (\nabla^{2} \tau(\phi))(X_2,X_1) \ra 
- \la d\phi(X_1) , \nabla_{X_2}(\Delta \tau(\phi))\ra  \\
& + \la d\phi(X_1),\nabla_{X_1}R(X_1 ,X_{2})\tau(\phi)\ra + \la d\phi(X_2),\nabla_{X_2}R(X_2 ,X_{1})\tau(\phi)\ra \\
&=2\la\nabla_{X_1} \tau(\phi),\nabla_{X_2} \tau(\phi)\ra + 2K^{M}\Big\{\la d\phi(X_1), \nabla_{X_2}\tau(\phi)\ra  + \la d\phi(X_2), \nabla_{X_1}\tau(\phi)\ra\Big\}\\
&+ 2\la \nabla d\phi(X_1,X_1) , (\nabla^{2} \tau(\phi))(X_1,X_2) \ra + 2\la \nabla d\phi(X_2,X_2) , (\nabla^{2} \tau(\phi))(X_2,X_1) \ra \\
&- \la \nabla_{X_1}d\phi(X_1),R(X_1 ,X_{2})\tau(\phi)\ra - \la \nabla_{X_2}d\phi(X_2),R(X_2 ,X_{1})\tau(\phi)\ra .
\end{align*}
But
\begin{align*}
& 2\la \nabla d\phi(X_1,X_1) , (\nabla^{2} \tau(\phi))(X_1,X_2) \ra - \la \nabla d\phi(X_1,X_1),R(X_1 ,X_{2})\tau(\phi)\ra \\
&=\la \nabla d\phi(X_1,X_1) , 2\nabla_{X_1}\nabla_{X_2} \tau(\phi) - \nabla_{X_1}\nabla_{X_2} \tau(\phi) + \nabla_{X_2}\nabla_{X_1} \tau(\phi) \ra ,
\end{align*}
so
\begin{align*}
&(\Delta^{R} S_2)(X_1,X_2) = 2K^{M}\Big\{\la d\phi(X_1), \nabla_{X_2}\tau(\phi)\ra  + \la d\phi(X_2), \nabla_{X_1}\tau(\phi)\ra\Big\}\\
& 2\la\nabla_{X_1} \tau(\phi),\nabla_{X_2} \tau(\phi)\ra  + \la \tau(\phi), \nabla_{X_1}\nabla_{X_2} \tau(\phi) + \nabla_{X_2}\nabla_{X_1} \tau(\phi) \ra .
\end{align*}
Since
\begin{align*}
(\nabla d|\tau(\phi)|^2)(X_1,X_2) &= \la \nabla_{X_1}\nabla_{X_2} \tau(\phi) + \nabla_{X_2}\nabla_{X_1} \tau(\phi),\tau(\phi) \ra +2 \la \nabla_{X_1} \tau(\phi),\nabla_{X_2}\tau(\phi) \ra ,
\end{align*}
we deduce that
\begin{align*}
&(\Delta^{R} S_2)(X_1,X_2) =-2 K^{M} S_2 (X_1,X_2) + (\nabla d|\tau(\phi)|^2)(X_1,X_2) .
\end{align*}
The other case to look at is when the two vectors are the same and then Proposition~\ref{prop1} shows that, using the symmetries of $R^N$, 
\begin{align*}
&(\Delta^{R} S_2)(X_1,X_1) =-2 \la R^{N}(X_1,\tau(\phi))d\phi(X_1),\tau(\phi)\ra -2 \la R^{N}(X_2,\tau(\phi))d\phi(X_2),\tau(\phi)\ra \\
&-2 \la \nabla_{X_2}\tau(\phi),\nabla_{X_2}\tau(\phi)\ra 
-2 K^{M}\la d\phi(X_2),\nabla_{X_2}\tau(\phi)\ra 
-2\la \nabla d\phi (X_2,X_2), (\nabla^{2} \tau(\phi))(X_2,X_2) \ra \\
&- 2\la \nabla d\phi (X_1,X_2), (\nabla^{2} \tau(\phi))(X_1,X_2) \ra
+2 \la d\phi(X_1) , \nabla_{X_1}(\Delta \tau(\phi))\ra + 2 \la d\phi(X_2) , \nabla_{X_2}(\Delta \tau(\phi))\ra \\
&+ 2K^{M}\la d\phi(X_1), \nabla_{X_1}\tau(\phi)\ra 
+2\la \nabla d\phi (X_1,X_1), (\nabla^{2} \tau(\phi))(X_1,X_1) \ra \\
&+ 2\la \nabla d\phi (X_2,X_1), (\nabla^{2} \tau(\phi))(X_2,X_1) \ra 
-2 \la d\phi(X_1) , \nabla_{X_1}(\Delta \tau(\phi))\ra \\
&+ 2 \la d\phi(X_1),\nabla_{X_2}R(X_2 ,X_{1})\tau(\phi)\ra \\
&=-2 |\nabla_{X_2}\tau(\phi)|^2 -2 K^{M}\la d\phi(X_2),\nabla_{X_2}\tau(\phi)\ra +2 K^{M}\la d\phi(X_1),\nabla_{X_1}\tau(\phi)\ra\\
&-2 \la R^{N}(d\phi(X_1),\tau(\phi))d\phi(X_1),\nabla d\phi(X_1,X_1)\ra -2 X_2 \la d\phi(X_2) , R^{N}(d\phi(X_1),\tau(\phi))d\phi(X_1) \ra \\
&-2 \la R^{N}(d\phi(X_2),\tau(\phi))d\phi(X_2),\nabla d\phi(X_1,X_1)\ra
-2\la \nabla d\phi (X_2,X_2), (\nabla^{2} \tau(\phi))(X_2,X_2) \ra \\
&+2\la \nabla d\phi (X_1,X_1), (\nabla^{2} \tau(\phi))(X_1,X_1) \ra
-2\la \nabla d\phi (X_1,X_2), R(X_1,X_2)\tau(\phi) \ra \\
&+ 2 \la d\phi(X_1),\nabla_{X_2}R(X_2 ,X_{1})\tau(\phi)\ra ,
\end{align*}
since:
\begin{align*}
i)&- 2\la \nabla d\phi (X_1,X_2), (\nabla^{2} \tau(\phi))(X_1,X_2) + 2 \la \nabla d\phi (X_2,X_1), (\nabla^{2} \tau(\phi))(X_2,X_1) \ra \\
&=- 2\la \nabla d\phi (X_1,X_2), R(X_1,X_2) \tau(\phi) \ra ;\\
ii)& -2 \la R^{N}(d\phi(X_1),\tau(\phi))d\phi(X_1),\tau(\phi)\ra -2 \la d\phi(X_2) , \nabla_{X_2}R^{N}(d\phi(X_1),\tau(\phi))d\phi(X_1) \ra \\
&=-2 \la R^{N}(d\phi(X_1),\tau(\phi))d\phi(X_1),\nabla_{X_1}d\phi(X_1)\ra -2 \la R^{N}(d\phi(X_1),\tau(\phi))d\phi(X_1),\nabla_{X_2}d\phi(X_2)\ra \\
&-2 X_2 \la d\phi(X_2) , R^{N}(d\phi(X_1),\tau(\phi))d\phi(X_1) \ra   +2 \la \nabla_{X_2}d\phi(X_2) , R^{N}(d\phi(X_1),\tau(\phi))d\phi(X_1) \ra ;\\
iii)& -2 \la R^{N}(d\phi(X_2),\tau(\phi))d\phi(X_2),\tau(\phi)\ra -2 \la d\phi(X_2) , \nabla_{X_2}R^{N}(d\phi(X_2),\tau(\phi))d\phi(X_2) \ra \\
&= -2 \la R^{N}(d\phi(X_2),\tau(\phi))d\phi(X_2),\nabla d\phi(X_1,X_1)\ra .
\end{align*}
Observe that
\begin{align*}
&- X_2 \la d\phi(X_2) , R^{N}(d\phi(X_1),\tau(\phi))d\phi(X_1) \ra + \la d\phi(X_1),\nabla_{X_2}R(X_2 ,X_{1})\tau(\phi)\ra \\
&= - X_2 R^{N}(d\phi(X_2),d\phi(X_1),d\phi(X_1),\tau(\phi)) + X_2 R^{N}(d\phi(X_2),d\phi(X_1),d\phi(X_1),\tau(\phi)) \\
&+ \la \nabla d\phi (X_1,X_2), R(X_1,X_2)\tau(\phi) \ra ,
\end{align*}
so
\begin{align*}
&(\Delta^{R} S_2)(X_1,X_1) =-2 |\nabla_{X_2}\tau(\phi)|^2 -2 K^{M}\la d\phi(X_2),\nabla_{X_2}\tau(\phi)\ra +2 K^{M}\la d\phi(X_1),\nabla_{X_1}\tau(\phi)\ra\\
&-2 \la R^{N}(d\phi(X_1),\tau(\phi))d\phi(X_1),\nabla d\phi(X_1,X_1)\ra +2\la \nabla d\phi (X_1,X_1), (\nabla^{2} \tau(\phi))(X_1,X_1) \ra\\
& -2 \la R^{N}(d\phi(X_2),\tau(\phi))d\phi(X_2),\nabla d\phi(X_1,X_1)\ra -2\la \nabla d\phi (X_2,X_2), (\nabla^{2} \tau(\phi))(X_2,X_2) \ra .
\end{align*}
But
\begin{align*}
\la \tau(\phi), \nabla_{X_2}\nabla_{X_2}\tau(\phi) \ra &= -\tfrac{1}{2}\Delta|\tau(\phi)|^2 - \tfrac{1}{2} X_1 X_1(|\tau(\phi)|^2) - |\nabla_{X_2}\tau(\phi)|^2 ,
\end{align*}
so
\begin{align*}
(\Delta^{R} S_2)(X_1,X_1) &=-2 K^{M}S_2(X_1,X_1) + \Big\{K^{M}|\tau(\phi)|^2 + \Delta|\tau(\phi)|^2\Big\}g(X_1,X_1) \\
&+ (\nabla d |\tau(\phi)|^2)( X_1, X_1) ,
\end{align*}
with a similar expression for $(\Delta^{R} S_2)(X_2,X_2)$.\\
Therefore
\begin{align*}
\Delta^{R}S_2 &= - 2 K^{M}S_2 + \nabla d (|\tau(\phi)|^2) +\Big\{K^{M}|\tau(\phi)|^2 + \Delta|\tau(\phi)|^2\Big\}g .
\end{align*}
\end{proof}

The expression for the Laplacian of the biharmonic stress-energy tensor on a surface is simple enough to be contracted with $S_2$ itself and combined with the divergence theorem, if the domain is assumed to be compact. The ensuing integral formula tightly binds the tensor $S_2$, the Gaussian curvature and the norm of the tension field together, and conditions on two of them determine the third.

More geometrical applications will be found for Riemannian immersions in the next section.

\begin{theorem} \label{thm1}
Let $\phi : M^2 \to N^n$ be a biharmonic map and assume $M^2$ is compact. Then
$$\int_{M} |\nabla S_2|^2 \, v_g + 2\int_{M} K^{M} \Big(|S_2|^2 -\frac{|\tau(\phi)|^4}{2}\Big) \, v_g = \int_{M} |d(|\tau(\phi)|^2)|^2 \, v_g ,$$
where $K^M$ is the Gaussian curvature of $(M^2,g)$.
\end{theorem}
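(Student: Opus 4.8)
The plan is to contract the pointwise identity of Proposition~\ref{prop2} with $S_2$ itself and integrate over the compact surface $M$. The mechanism is the Bochner-type relation, valid for any symmetric $(0,2)$-tensor precisely because the sign conventions for the Laplacian on functions and on tensors agree: $\tfrac{1}{2}\Delta|S_2|^2 = \la \Delta^R S_2, S_2\ra - |\nabla S_2|^2$. Since $M$ is compact, $\int_M \Delta|S_2|^2\, v_g = 0$, so $\int_M \la \Delta^R S_2, S_2\ra\, v_g = \int_M |\nabla S_2|^2\, v_g$. Substituting the formula for $\Delta^R S_2$ from Proposition~\ref{prop2} then reduces the theorem to controlling three integrals coming from the three summands: the curvature term $-2\int_M K^M |S_2|^2$, the Hessian term $\int_M \la \nabla d(|\tau(\phi)|^2), S_2\ra$, and the trace term $\int_M \big(K^M|\tau(\phi)|^2 + \Delta|\tau(\phi)|^2\big)\,\la g, S_2\ra$.

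First I would compute $\la g, S_2\ra = \tr_g S_2$. Directly from the definition, $\tr_g(T) = 2\la d\phi, \nabla\tau(\phi)\ra$, while the scalar coefficient contributes $2\cdot\{\tfrac{|\tau(\phi)|^2}{2} + \la d\phi, \nabla\tau(\phi)\ra\}$; the two copies of $\la d\phi, \nabla\tau(\phi)\ra$ cancel and $\tr_g S_2 = |\tau(\phi)|^2$. This identifies the trace term as $\int_M K^M|\tau(\phi)|^4 + \int_M |\tau(\phi)|^2\,\Delta|\tau(\phi)|^2$. The second integral is handled by a single integration by parts: with the chosen Laplacian sign, $\int_M |\tau(\phi)|^2\,\Delta|\tau(\phi)|^2 = \int_M |d(|\tau(\phi)|^2)|^2$, which will supply the right-hand side of the asserted identity.

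The key step, and the main obstacle, is the vanishing of the Hessian term. Here I would exploit the Hilbert property that $S_2$ is divergence free: since $\phi$ is biharmonic, $\tau_2(\phi)=0$, whence $\di S_2 = \la d\phi, \tau_2(\phi)\ra = 0$. Writing $f=|\tau(\phi)|^2$ and introducing the vector field $V$ metrically dual to the one-form $X \mapsto S_2(\grad f, X)$, a computation in a geodesic frame gives $\di V = (\di S_2)(\grad f) + \la \nabla df, S_2\ra$, where the symmetry of $S_2$ is used to move $\grad f$ into the divergence slot of $\di S_2$. The divergence-free property annihilates the first summand, and the divergence theorem then forces $\int_M \la \nabla d(|\tau(\phi)|^2), S_2\ra\, v_g = 0$.

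Collecting the surviving pieces yields $\int_M |\nabla S_2|^2 = -2\int_M K^M|S_2|^2 + \int_M K^M |\tau(\phi)|^4 + \int_M |d(|\tau(\phi)|^2)|^2$, and regrouping the two curvature integrals as $2\int_M K^M\big(|S_2|^2 - \tfrac{1}{2}|\tau(\phi)|^4\big)$ produces exactly the stated formula. The only delicate points are the sign bookkeeping in the Bochner relation, where the shared convention for the Laplacian on functions and tensors is what makes $\int_M\la\Delta^R S_2,S_2\ra = \int_M|\nabla S_2|^2$ hold without an extra sign, and verifying that the divergence-free identity for $S_2$ genuinely applies, so that no boundary contributions survive on the closed surface $M$.
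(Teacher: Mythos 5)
Your proposal is correct and follows essentially the same route as the paper: contract the formula of Proposition~\ref{prop2} with $S_2$, use $\tr_g S_2=|\tau(\phi)|^2$, kill the Hessian term via $\di S_2=0$ and the divergence theorem (your vector field $V$ is exactly the dual of $\la S_2, d(|\tau(\phi)|^2)\ra$ used in the paper), and invoke the classical identity $\int_M \la \Delta^R S_2, S_2\ra\, v_g=\int_M |\nabla S_2|^2\, v_g$. Your sign bookkeeping is consistent with the paper's convention $\Delta f=-f''$, so the argument goes through as stated.
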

\begin{proof}
Observe that
$$ \di \la S_2 , d(|\tau(\phi)|^2)\ra = \la \di S_2, d(|\tau(\phi)|^2)\ra + \la S_2 , \hess(|\tau(\phi)|^2)\ra .$$
As $\di S_2 =0$, we have
$$\int_{M} \la S_2 , \hess(|\tau(\phi)|^2)\ra \, v_g = \int_{M} \di \Big\{ \la S_2 , d(|\tau(\phi)|^2)\ra\Big\} \, v_g = 0 ,$$
which combined with the classical equality
$$ \int_M \la \Delta^R S_2 , S_2 \ra \, v_g = \int_{M} |\nabla S_2|^2 \, v_g ,$$
gives the theorem.
\end{proof}

\begin{remark} \label{remark1}
Note that the term $ 2 |S_2|^2 - |\tau(\phi)|^4 $ is always non-negative since equal to $\big((S_2 (X_1,X_1) - S_2 (X_2, X_2)\big)^2 + 4 S^2_2(X_1,X_2)$ and $|S_2|^2 = \frac{|\tau(\phi)|^4}{2}$ if and only if 
$S_2 = \frac{|\tau(\phi)|^2}{2} g$.
\end{remark}

A biharmonic map with parallel stress-energy tensor must have a tension field of constant norm~\cite{LMO}, but Proposition~\ref{prop2} shows greater restrictions for two dimensional domains.

\begin{corollary}\label{cor1}
Let $\phi : M^2 \to N^n$ be a biharmonic map, assume $M$ is compact and $\nabla S_2 =0$. Then $|\tau(\phi)|$ constant and $\int_{M}  K^{M} \, v_g = 0$ or $S_2 = \frac{|\tau(\phi)|^2}{2} g$.
\end{corollary}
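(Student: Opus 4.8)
The plan is to substitute the hypothesis $\nabla S_2 = 0$ into the integral identity of Theorem~\ref{thm1}, after first squeezing the constancy of $|\tau(\phi)|$ out of the parallelism of $S_2$. The starting observation is that a parallel tensor has constant pointwise norm, so $|S_2|^2$ is constant; moreover, since $g$ is itself parallel, $\nabla S_2 = 0$ forces the $g$-trace of $S_2$ to be constant as well. I would then compute this trace in dimension two: the contribution coming from $T$ exactly cancels the term $\la d\phi, \nabla\tau(\phi)\ra$ sitting in the scalar factor, leaving $\tr S_2 = |\tau(\phi)|^2$. Constancy of the trace is therefore precisely the first assertion, that $|\tau(\phi)|$ is constant.

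With $|\tau(\phi)|^2$ now known to be constant, the quantity $|S_2|^2 - \tfrac{|\tau(\phi)|^4}{2}$ is a constant, which I will call $c$; by Remark~\ref{remark1} it satisfies $c \ge 0$, with $c = 0$ holding if and only if $S_2 = \tfrac{|\tau(\phi)|^2}{2}\, g$. Constancy of $|\tau(\phi)|^2$ also gives $d(|\tau(\phi)|^2) = 0$, so the right-hand side of Theorem~\ref{thm1} vanishes, while $\nabla S_2 = 0$ kills the $\int_M |\nabla S_2|^2 \, v_g$ term. Pulling the constant $c$ outside the remaining integral, the identity collapses to $2c \int_M K^M \, v_g = 0$.

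From here the stated dichotomy is immediate: if $c = 0$ then $S_2 = \tfrac{|\tau(\phi)|^2}{2}\, g$ by the equality case of Remark~\ref{remark1}, whereas if $c \neq 0$ we may divide by $2c$ to conclude $\int_M K^M \, v_g = 0$. I would note that an alternative, entirely pointwise route is available through Proposition~\ref{prop2}: parallelism gives $\Delta^R S_2 = 0$, and with $|\tau(\phi)|^2$ constant the formula there reduces to $K^M\big(S_2 - \tfrac{|\tau(\phi)|^2}{2}\, g\big) = 0$, so at each point either $K^M$ vanishes or $S_2 - \tfrac{|\tau(\phi)|^2}{2}\, g$ does; combined with the constancy of $c$ this reproduces the same conclusion. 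The Theorem~\ref{thm1} argument is the cleaner of the two.

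The only delicate point, and the one on which everything downstream rests, is the trace identity $\tr S_2 = |\tau(\phi)|^2$; once it is in hand, the remainder is a matter of feeding constants into an identity already proved. I therefore expect no real obstacle here beyond verifying that the cancellation in the trace is exact, which is a short and routine computation.
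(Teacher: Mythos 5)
Your proposal is correct and follows essentially the same route as the paper: parallelism of $S_2$ gives constancy of $|S_2|^2$ and of $\tr S_2 = |\tau(\phi)|^2$ (the trace cancellation you verify in dimension two is exactly what the paper uses implicitly), and substituting into Theorem~\ref{thm1} collapses the identity to $\bigl(|S_2|^2 - \tfrac{|\tau(\phi)|^4}{2}\bigr)\int_M K^M\,v_g = 0$, with the dichotomy then read off from Remark~\ref{remark1}. Your pointwise alternative via Proposition~\ref{prop2} is also sound (and in the case $c\neq 0$ even yields $K^M \equiv 0$ rather than just vanishing total curvature), but it is supplementary to the main argument, which matches the paper's.
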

\begin{proof}
If $\nabla S_2 =0$, then its norm and trace, $|\tau(\phi)|^2$, are constant, hence
$$\Big(|S_2|^2 - \frac{|\tau(\phi)|^4}{2}\Big) \int_{M} K^{M} \, v_g =0 .$$
\end{proof}

If the norm of the tension field is constant, we can deduce a partial converse for non-negative curvature.

\begin{corollary} \label{cor2}
Let $\phi : (M^2 ,g) \to (N^n ,h)$ be a proper-biharmonic map with $|\tau(\phi)|^2$ constant. Assume $M$ is compact and $K^{M}\geq 0$. Then $S_2$ is parallel and $M$ is flat or $S_2 = \frac{|\tau(\phi)|^2}{2} g$.
\end{corollary}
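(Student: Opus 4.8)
The plan is to feed the constancy of $|\tau(\phi)|^2$ into the integral formula of Theorem~\ref{thm1} and exploit the non-negativity built into Remark~\ref{remark1}. First I would observe that, since $|\tau(\phi)|^2$ is constant, its differential vanishes, so the right-hand side of the identity in Theorem~\ref{thm1} is zero, leaving
\[
\int_M |\nabla S_2|^2\, v_g + 2\int_M K^M\Big(|S_2|^2 - \tfrac{|\tau(\phi)|^4}{2}\Big)\, v_g = 0.
\]
By Remark~\ref{remark1} the factor $|S_2|^2 - \tfrac{|\tau(\phi)|^4}{2}$ is pointwise non-negative, and the hypothesis $K^M \geq 0$ renders the second integrand non-negative as well, while the first integrand is manifestly non-negative. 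A sum of two non-negative integrals that vanishes forces each to vanish separately, so I would conclude $\nabla S_2 \equiv 0$ — that is, $S_2$ is parallel — together with the pointwise identity $K^M\big(|S_2|^2 - \tfrac{|\tau(\phi)|^4}{2}\big) = 0$ on all of $M$.

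Next I would upgrade this pointwise product into a genuine dichotomy. Because $S_2$ is now parallel, its norm is constant (differentiating $|S_2|^2$ yields $2\langle \nabla S_2, S_2\rangle = 0$), and $|\tau(\phi)|^2$ is constant by assumption, so the quantity $c := |S_2|^2 - \tfrac{|\tau(\phi)|^4}{2}$ is a non-negative constant on $M$. If $c = 0$, then the equality case of Remark~\ref{remark1}, namely $|S_2|^2 = \tfrac{|\tau(\phi)|^4}{2}$, immediately gives $S_2 = \tfrac{|\tau(\phi)|^2}{2} g$. If instead $c > 0$, then the pointwise relation $K^M c = 0$ forces $K^M \equiv 0$, so $M$ is flat. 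These two alternatives exhaust the possibilities and establish the corollary.

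There is no serious analytic obstacle here: the argument is essentially a sign-definiteness reading of the already established integral formula, so the main work was all done in Theorem~\ref{thm1}. The one point that deserves care is precisely the passage from the pointwise vanishing of the product $K^M\big(|S_2|^2 - \tfrac{|\tau(\phi)|^4}{2}\big)$ to the stated alternative, which is exactly where the parallelism of $S_2$ is invoked to ensure that $|S_2|^2 - \tfrac{|\tau(\phi)|^4}{2}$ is a \emph{constant} and not merely a non-negative function; without that constancy one could only assert that $K^M$ vanishes wherever $S_2 \neq \tfrac{|\tau(\phi)|^2}{2} g$, a strictly weaker conclusion than the clean global dichotomy claimed.
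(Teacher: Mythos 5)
Your proposal is correct and follows exactly the argument the paper intends: the corollary is stated without proof precisely because it is the immediate sign-definiteness reading of Theorem~\ref{thm1} (with $d(|\tau(\phi)|^2)=0$) combined with Remark~\ref{remark1}, which is what you carry out. Your additional care in noting that parallelism of $S_2$ makes $|S_2|^2-\tfrac{|\tau(\phi)|^4}{2}$ a genuine constant, so that the pointwise vanishing of the product yields the clean global dichotomy rather than merely ``$K^M=0$ off the pseudo-umbilical set,'' is exactly the right observation and fills in the one step the paper leaves implicit.
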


\section{Constant mean curvature surfaces}

To be able to offer conditions with greater geometrical content, we concentrate our applications on Riemannian immersions. The recurrent condition on the map is pseudo-umbilicity, as an equality between the shape operator $A_H$ in the direction of the mean curvature vector field $H$ and the metric.

The pivotal role of pseudo-umbilical immersions, already observed in the study of the biharmonic stress-energy tensor (cf.~\cite{LMO}), emerges again in link with the curvature of the domain surface, sometimes to the extent of determining its topology.

In the absence of compactness, the divergence theorem is substituted by a parabolicity argument on constant mean curvature surfaces, associated with a bound on the curvature tensor of the target space.

Finally, working with complex coordinates on a Riemann surface, the $(2,0)$-part of the $H$-component of the second fundamental form $B$ is shown to be holomorphic if and only if the mean curvature is constant.

Recall that if $\phi : M^2 \to N$ is a pseudo-umbilical proper-biharmonic Riemannian immersion then the norm of its mean curvature vector field is constant.
As a consequence, and since $S_2 = -2|H|^2 g + 4 A_H$, a re-wording of Corollaries~\ref{cor1} and~\ref{cor2} is

\begin{corollary}\label{cor2.5}
Let $\phi : (M^2 ,g) \to (N^n ,h)$ be a proper-biharmonic Riemannian immersion with  $\nabla A_H =0$, from a compact oriented surface. Then $M$ is topologically a torus or pseudo-umbilical.
\end{corollary}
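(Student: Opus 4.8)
The plan is to recognize this as an almost immediate consequence of Corollary~\ref{cor1} and the Gauss--Bonnet theorem, the only real step being to upgrade the hypothesis $\nabla A_H = 0$ to $\nabla S_2 = 0$. I would use throughout that for a Riemannian immersion of a surface $\tau(\phi) = 2H$, so $|\tau(\phi)|^2 = 4|H|^2$, together with the identity $S_2 = -2|H|^2 g + 4A_H$ recalled before the statement.

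First I would check that $\nabla A_H = 0$ forces $|H|^2$ to be constant. Viewing $A_H$ as the endomorphism defined by $\la A_H X, Y\ra = \la B(X,Y), H\ra$, its trace is $\tr A_H = \sum_i \la B(X_i,X_i), H\ra = \la \tau(\phi), H\ra = 2|H|^2$. Since covariant differentiation commutes with contraction, $X(2|H|^2) = \tr(\nabla_X A_H) = 0$ for every $X$, so $|H|^2$ is constant. As $g$ is parallel, differentiating $S_2 = -2|H|^2 g + 4A_H$ then gives $\nabla S_2 = -2\, d(|H|^2)\otimes g + 4\nabla A_H = 0$. Now Corollary~\ref{cor1} applies and yields the dichotomy: either $\int_M K^M\, v_g = 0$ or $S_2 = \tfrac{1}{2}|\tau(\phi)|^2 g$. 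In the first case, Gauss--Bonnet gives $2\pi\chi(M) = \int_M K^M\, v_g = 0$, hence $\chi(M)=0$, and a compact oriented surface with vanishing Euler characteristic is a torus. In the second case, substituting $S_2 = -2|H|^2 g + 4A_H$ and $\tfrac{1}{2}|\tau(\phi)|^2 = 2|H|^2$ collapses the identity to $4A_H = 4|H|^2 g$, i.e. $A_H = |H|^2 g$, which is precisely pseudo-umbilicity; properness ensures $|H|\neq 0$ so this is non-degenerate.

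The main obstacle is essentially bookkeeping: there is no hard analytic step, only the observation that parallelism of the shape operator already pins down $|H|^2$ (and hence the scalar part $-2|H|^2 g$ of $S_2$), so that $\nabla S_2 = 0$ and Corollary~\ref{cor1} can be invoked verbatim. I would take care with the normalization $\tau(\phi) = 2H$ throughout, since the factor mismatches are the only place an error could creep in, and I would note that the two conclusions are not mutually exclusive, so the statement is genuinely an alternative rather than a strict dichotomy.
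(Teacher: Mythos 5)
Your proof is correct and follows essentially the same route as the paper, which presents this corollary as a direct re-wording of Corollary~\ref{cor1} via the identity $S_2 = -2|H|^2 g + 4A_H$. Your trace argument showing that $\nabla A_H = 0$ forces $|H|^2$ constant (hence $\nabla S_2 = 0$) is exactly the implicit step the paper leaves to the reader, and your bookkeeping with $\tau(\phi) = 2H$ and Gauss--Bonnet is accurate.
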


\begin{corollary}\label{cor3}
Let $\phi : (M^2 ,g) \to (N^n ,h)$ be a proper-biharmonic Riemannian immersion with $|H|^2$ constant. Assume $M$ compact and $K^{M}\geq 0$. Then $\nabla A_H =0$ and $M$ is flat or pseudo-umbilical.
\end{corollary}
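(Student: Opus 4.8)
The plan is to read this corollary as the specialization of Corollary~\ref{cor2} to Riemannian immersions and then to translate its two conclusions into statements about the shape operator $A_H$. First I would record the normalizations linking the map-theoretic and submanifold-theoretic quantities: for a Riemannian immersion of a surface one has $\tau(\phi)=2H$, hence $|\tau(\phi)|^2=4|H|^2$, so the hypothesis ``$|H|^2$ constant'' is exactly ``$|\tau(\phi)|^2$ constant.'' Together with $M$ compact and $K^M\geq 0$, this is precisely the hypothesis set of Corollary~\ref{cor2}, whose output is that $S_2$ is parallel and that either $M$ is flat or $S_2=\frac{|\tau(\phi)|^2}{2}g$.

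From $\nabla S_2=0$ I would then extract $\nabla A_H=0$ directly. Using the identity $S_2=-2|H|^2 g+4A_H$ recalled before Corollary~\ref{cor2.5}, together with the parallelism of $g$ and the constancy of $|H|^2$, the summand $-2|H|^2 g$ is parallel; hence $\nabla S_2=0$ gives $4\nabla A_H=0$, that is $\nabla A_H=0$.

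It then remains to match the second alternative of Corollary~\ref{cor2} with pseudo-umbilicity. Substituting $|\tau(\phi)|^2=4|H|^2$ turns $S_2=\frac{|\tau(\phi)|^2}{2}g$ into $S_2=2|H|^2 g$, and comparing with $S_2=-2|H|^2 g+4A_H$ yields $A_H=|H|^2 g$. This is exactly the defining relation of a pseudo-umbilical immersion, the proportionality factor being forced to equal $|H|^2$ by taking traces, since $\tr A_H=\la \tau(\phi),H\ra=2|H|^2$. As the converse implication is immediate, $S_2=\frac{|\tau(\phi)|^2}{2}g$ and pseudo-umbilicity are equivalent, so the dichotomy ``$M$ flat or $S_2=\frac{|\tau(\phi)|^2}{2}g$'' becomes ``$M$ flat or pseudo-umbilical,'' which is the claim.

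I do not anticipate a genuine obstacle, as all the analytic work is already carried out in Corollary~\ref{cor2}; the only care required is the bookkeeping of the normalization constants relating $\tau(\phi)$, $H$, $A_H$ and $S_2$, and verifying that $S_2=\frac{|\tau(\phi)|^2}{2}g$ is equivalent to, rather than merely implied by, pseudo-umbilicity.
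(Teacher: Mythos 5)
Your proposal is correct and takes essentially the same route as the paper, which presents Corollary~\ref{cor3} precisely as a re-wording of Corollary~\ref{cor2} via the identity $S_2=-2|H|^2g+4A_H$ (noting $\tau(\phi)=2H$, so $|\tau(\phi)|^2=4|H|^2$). Your explicit bookkeeping --- extracting $\nabla A_H=0$ from $\nabla S_2=0$ and the constancy of $|H|^2$, and checking via the trace that $S_2=\tfrac{|\tau(\phi)|^2}{2}g$ is equivalent to $A_H=|H|^2\Id$ --- simply fills in the details the paper leaves implicit.
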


The next result shows that pseudo-umbilical points allow some flexibility of the curvature, since away from these points special coordinates exist, in which the metric is conformally flat (with a globally defined factor), the shape operator has a simple expression, while its eigenvalues can be computed from the mean curvature vector field.

\begin{theorem} \label{thm2}
Let $\phi : (M^2 ,g) \to (N^n ,h)$ be a proper-biharmonic Riemannian immersion with constant mean curvature. 
We denote by $\lambda_1$ and $\lambda_2$ the principal curvatures of $M$ corresponding to $A_H$, $\lambda_1 \geq \lambda_2$, and let $\mu = \lambda_1 - \lambda_2$. Consider $p\in M$ such that $\mu(p)>0$, i.e. $p$ is a non pseudo-umbilical point. Then, around $p$, there is a local chart $(U;x,y)$ which is both isothermal and a line of curvature coordinate system for $A_H$. We have, on $U$,
$$ g =\frac{1}{\mu} ( dx^2 + dy^2), \quad \la A_H (.) , .\ra = \frac{1}{\mu} ( \lambda_1 dx^2 + \lambda_2 dy^2) ,$$
$$\sum_{i=1}^2 R^{N}(X_i,H,X_i,H) - |\nabla^{\perp}H|^2 -2 |H|^4 >0 ,$$
and
\begin{align*}
\lambda_1 &= |H|^2 + \frac{\sqrt{2}}{2}\sqrt{\sum_{i=1}^2 R^{N}(X_i,H,X_i,H) - |\nabla^{\perp}H|^2 -2 |H|^4} ,\\
\lambda_2 &= |H|^2 - \frac{\sqrt{2}}{2}\sqrt{\sum_{i=1}^2 R^{N}(X_i,H,X_i,H) - |\nabla^{\perp}H|^2 -2 |H|^4} ,\\
\end{align*}
with $X_1 = \sqrt{\mu} \partial_x$, $X_2 = \sqrt{\mu} \partial_y$. Moreover, 
$$\Delta \ln \Big( \sum_{i=1}^2 R^{N}(X_i,H,X_i,H) - |\nabla^{\perp}H|^2 -2 |H|^4 \Big) = -4 K^M ,$$
and the Gauss equation becomes
$$\riem^{N} (X_1,X_2) = K^M - 2 |H|^2 + \frac{1}{2 |H|^2} \sum_{i=1}^2 R^{N}(X_i,H,X_i,H) .$$
\end{theorem}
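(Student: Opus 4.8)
The plan is to read everything off the orthonormal frame $\{e_1,e_2\}$ of principal directions of $A_H$, valid near $p$ since there $\lambda_1>\lambda_2$, using only two scalar identities together with the Codazzi and biharmonic equations.

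First I would establish the formulas for $\lambda_1,\lambda_2$. Because $\la A_HX,Y\ra=\la B(X,Y),H\ra$, the trace of $A_H$ equals $\la\tr B,H\ra=2|H|^2$, so $\lambda_1+\lambda_2=2|H|^2$. For the second relation I would invoke the identity $\la\Delta\tau(\phi),\tau(\phi)\ra=\sum_i R^N(d\phi(X_i),\tau(\phi),d\phi(X_i),\tau(\phi))$ from the proof of Proposition~\ref{prop1}: with $\tau(\phi)=2H$ it becomes $\la\Delta H,H\ra=\sum_i R^N(X_i,H,X_i,H)$ (the rough Laplacian of $\phi^{-1}TN$), while the constancy of $|H|^2$ forces $\la\Delta H,H\ra=|\nabla H|^2$, and the Weingarten splitting $\nabla_{X_i}H=-A_HX_i+\nabla^{\perp}_{X_i}H$ gives $|\nabla H|^2=|A_H|^2+|\nabla^{\perp}H|^2=\lambda_1^2+\lambda_2^2+|\nabla^{\perp}H|^2$. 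Hence $\lambda_1^2+\lambda_2^2=\sum_i R^N(X_i,H,X_i,H)-|\nabla^{\perp}H|^2$, and combining, $\mu^2=2(\lambda_1^2+\lambda_2^2)-(\lambda_1+\lambda_2)^2=2\big(\sum_i R^N(X_i,H,X_i,H)-|\nabla^{\perp}H|^2-2|H|^4\big)$. As $\mu(p)>0$ this yields both the stated positivity of the radicand near $p$ and, through $\lambda_{1,2}=|H|^2\pm\mu/2$, the closed forms of $\lambda_1,\lambda_2$.

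Next, and this is the technical heart, I would produce the chart. Near $p$ the principal directions form a smooth orthonormal frame $\{e_1,e_2\}$; let $\{\theta^1,\theta^2\}$ be the dual coframe and $\omega=\omega^1_2$ the connection form, so $d\theta^1=-\omega\wedge\theta^2$ and $d\theta^2=\omega\wedge\theta^1$. That a chart $(x,y)$ be at once isothermal and a line-of-curvature chart with $X_1=\sqrt\mu\,\partial_x,\ X_2=\sqrt\mu\,\partial_y$ is equivalent to $\sqrt\mu\,\theta^1=dx$ and $\sqrt\mu\,\theta^2=dy$, hence to the closedness of $\sqrt\mu\,\theta^1$ and $\sqrt\mu\,\theta^2$; by the structure equations this amounts to $\omega(e_1)=-\tfrac12 e_2(\ln\mu)$ and $\omega(e_2)=\tfrac12 e_1(\ln\mu)$. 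I would extract these from the Codazzi equation for $A_H$, namely $(\nabla_XA_H)Y-(\nabla_YA_H)X=-\big(R^N(X,Y)H\big)^{\top}+A_{\nabla^{\perp}_XH}Y-A_{\nabla^{\perp}_YH}X$, evaluated on $(e_1,e_2)$: its left-hand side is exactly $\omega(e_i)\,\mu$ plus derivatives $e_j(\lambda_k)$, and the constancy of $\lambda_1+\lambda_2$ turns each $e_j(\lambda_k)$ into $\pm\tfrac12 e_j(\mu)$, so the desired identities hold precisely when the curvature and $\nabla^{\perp}H$ terms on the right cancel; that cancellation I expect to come from the tangential component of $\tau_2(\phi)=0$. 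This is the step I anticipate to be the main obstacle. Granting the chart, $\theta^1=\tfrac1{\sqrt\mu}dx$ and $\theta^2=\tfrac1{\sqrt\mu}dy$ give $g=\tfrac1\mu(dx^2+dy^2)$, and $A_H$ being diagonal in $\{e_1,e_2\}$ gives $\la A_H(.),.\ra=\tfrac1\mu(\lambda_1dx^2+\lambda_2dy^2)$.

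The Laplacian identity is then immediate: writing $g=e^{2\rho}(dx^2+dy^2)$ with $\rho=-\tfrac12\ln\mu$, the isothermal formula for the Gaussian curvature, $\Delta\rho=K^M$, gives $\Delta\ln\mu=-2K^M$; since the radicand equals $\mu^2/2$, its logarithm is $2\ln\mu-\ln2$, so $\Delta\ln\big(\sum_i R^N(X_i,H,X_i,H)-|\nabla^{\perp}H|^2-2|H|^4\big)=-4K^M$. For the Gauss equation I would start from $K^M=\riem^N(X_1,X_2)+\la B(X_1,X_1),B(X_2,X_2)\ra-|B(X_1,X_2)|^2$ and split $B$ into its $H$-component and the orthogonal part $B^{\perp}$. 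The $H$-component is diagonal, $\la B(X_i,X_j),H\ra=\lambda_i\delta_{ij}$, contributing $\lambda_1\lambda_2/|H|^2=2|H|^2-(\lambda_1^2+\lambda_2^2)/(2|H|^2)$ after using $\lambda_1+\lambda_2=2|H|^2$ (note $|H|\neq0$ as $\phi$ is proper). The remaining term $\la B^{\perp}(X_1,X_1),B^{\perp}(X_2,X_2)\ra-|B^{\perp}(X_1,X_2)|^2$ I would evaluate from the components of $\tau_2(\phi)=0$ normal to $M$ and orthogonal to $H$, which bind $B^{\perp}$ to $\nabla^{\perp}H$ and should yield $-|\nabla^{\perp}H|^2/(2|H|^2)$ (this vanishes identically in codimension one). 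Adding the two contributions and recognizing $\lambda_1^2+\lambda_2^2+|\nabla^{\perp}H|^2=\sum_i R^N(X_i,H,X_i,H)$ rearranges the Gauss equation into the asserted form.
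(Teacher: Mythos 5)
Your route is, for the bulk of the statement, the paper's own: same trace identity $\lambda_1+\lambda_2=2|H|^2$; the scalar identity $|A_H|^2+|\nabla^{\perp}H|^2=\sum_i R^N(X_i,H,X_i,H)$ (the paper gets it by pairing the normal part of the bitension with $H$, you by pairing the full bitension with $\tau(\phi)$ and using $|H|$ constant --- equivalent); the chart via closedness of $\sqrt{\mu}\,\theta^1,\sqrt{\mu}\,\theta^2$, which is the dual formulation of the paper's $\big[\tfrac{1}{\sqrt{\mu}}X_1,\tfrac{1}{\sqrt{\mu}}X_2\big]=0$; and $\Delta\ln\mu=-2K^M$ (paper via $d\omega^1_2(X_1,X_2)=K^M$, you via the conformal factor --- again equivalent). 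The step you deferred as the ``main obstacle'' does go through exactly as you anticipated, and is the heart of the paper's proof: Codazzi on $(X_1,X_2)$ gives
\begin{align*}
R^{N}(X_1,H,X_2,X_1) &=-\omega^{1}_{2}(X_1)\mu - X_2 \lambda_1 - \la B(X_2 ,X_1),\nabla_{X_1}^{\perp}H \ra + \la B(X_1 ,X_1),\nabla_{X_2}^{\perp}H\ra ,
\end{align*}
and adding the tangential biharmonic equation the curvature terms cancel while the two $\nabla^{\perp}H$ terms do \emph{not} cancel but combine into $2\la H,\nabla^{\perp}_{X_2}H\ra$, which vanishes by CMC; with $\lambda_1+\lambda_2$ constant this yields $\omega^{1}_{2}(X_1)=-\tfrac12 X_2(\ln\mu)$ and its companion, i.e.\ precisely your integrability conditions.

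The genuine gap is your last step, the Gauss display. Your reduction is correct up to the point where, using $B^{\perp}(X_2,X_2)=-B^{\perp}(X_1,X_1)$, the asserted formula is equivalent to the quadratic identity
$$|B^{\perp}(X_1,X_1)|^2+|B^{\perp}(X_1,X_2)|^2=\frac{|\nabla^{\perp}H|^2}{2|H|^2},$$
but the source you propose cannot deliver it: the component of $\tau_2(\phi)=0$ normal to $M$ and orthogonal to $H$ reads $\mu\, B^{\perp}(X_1,X_1)=-\big(\Delta^{\perp}H+\tr(R^{N}(\cdot,H)\cdot)^{\perp}\big)^{\perp_{H}}$, a \emph{linear} relation in which $B^{\perp}(X_1,X_2)$ does not occur at all, and the tangential equations only relate $\la B^{\perp}(X_i,X_j),\nabla^{\perp}_{X_k}H\ra$ to ambient curvature terms; no combination of these visibly produces the quadratic identity. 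Without it, the honest outcome of your computation is
$$\riem^{N}(X_1,X_2)=K^M-2|H|^2+\frac{1}{2|H|^2}\sum_{i=1}^2 R^{N}(X_i,H,X_i,H)-\frac{|\nabla^{\perp}H|^2}{2|H|^2}+|B^{\perp}(X_1,X_1)|^2+|B^{\perp}(X_1,X_2)|^2 .$$
You are in good company: the paper's written proof stops after the formula for $\lambda_1-\lambda_2$ and never derives the final display either; it is immediate only in codimension one, where $B^{\perp}=0$ and $\nabla^{\perp}H=0$ (which is the only setting in which the paper invokes it, in Corollary~\ref{cor4}). So either supply a proof of the quadratic identity or state the Gauss equation with the extra terms above.
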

\begin{proof}
Let $\lambda_1$ and $\lambda_2$ be the principal curvatures in the direction of $H$, i.e. $\lambda_1$ and $\lambda_2$ are the eigenvalues of $A_{H}$. In an open neighbourhood $U$ around a non pseudo-umbilical point $p$, $\lambda_1 >\lambda_2$ on $U$, $\lambda_1 ,\lambda_2 \in C^{\infty}(U)$ (in general they are only continuous) and therefore $\mu=\lambda_1 - \lambda_2$ is a positive smooth function on $U$.

Let $\{X_1 ,X_2\}$ be a local orthonormal frame on $U$ such that $A_{H}(X_1)=\lambda_1 X_1$ and $A_{H}(X_2)=\lambda_2 X_2$. We consider $\omega^{2}_{1}$ and $\omega_{2}^{1} \in \bigwedge^{1} (U)$ defined by
$$\nabla X_1 = \omega^{2}_{1} X_2 \mbox{ and } \nabla X_2 = \omega^{1}_{2} X_1 .$$
Clearly $\omega^{2}_{1}=-\omega^{1}_{2}$.\\
If we put $X=Z=X_1$ and $Y=X_2$, the Codazzi equation becomes
\begin{align*}
R^{N}(X_1,H,X_2,X_1) &=-\omega^{1}_{2}(X_1)\mu - X_2 \lambda_1 - \la B(X_2 ,X_1),\nabla_{X_1}^{\perp}H \ra\\
&+ \la B(X_1 ,X_1),\nabla_{X_2}^{\perp}H\ra .
\end{align*}
Recall that the tangent part of the biharmonic equation is
$$\la B(X_2 ,X_1),\nabla_{X_1}^{\perp}H \ra + \la B(X_2 ,X_2),\nabla_{X_2}^{\perp}H \ra + R^{N}(X_1,H,X_2,X_1) =0 ,$$
thus
\begin{align*}
\omega^{1}_{2}(X_1)\mu + X_2 \lambda_1 = 2 \la H, \nabla_{X_2}^{\perp}H \ra =0 ,
\end{align*}
and
$$ \omega^{1}_{2}(X_1) =- \frac{X_2 \lambda_1}{\mu} .$$
Note that
\begin{align*}
X_2 (\lambda_2) &= X_2 \la A_{H}(X_2),X_2\ra = X_2 \la B(X_2,X_2),H\ra = -X_2 \lambda_1 ,
\end{align*}
therefore 
$$ \omega^{1}_{2}(X_1) = \tfrac{1}{2}\Big( - \frac{X_2 \lambda_1}{\mu} + \frac{X_2 \lambda_2}{\mu}\Big)= -\tfrac{1}{2} \frac{X_2 \mu}{\mu} .$$
Exchanging $X_1$ and $X_2$, we similarly obtain
$$ \omega^{1}_{2}(X_2) = \tfrac{1}{2} \frac{X_1 \mu}{\mu} ,$$
therefore
$$\omega^{1}_{2} = -\tfrac{1}{2} \frac{X_2 \mu}{\mu}\omega_1 + \tfrac{1}{2} \frac{X_1 \mu}{\mu} \omega_2 .$$
The Gauss equation implies that
$$ d\omega^{1}_{2} (X_1,X_2)=K^{M} ,$$
i.e.
\begin{align*}
K^{M} &= \tfrac{1}{2} \Big( X_1 X_1 \ln \mu + X_2 X_2 \ln \mu \Big) -(\omega^{1}_{2}(X_1))^2 -(\omega^{1}_{2}(X_2))^2 ,
\end{align*}
but
\begin{align*}
\nabla_{X_1}X_1 &= \tfrac{1}{2} \Big( X_2 \ln \mu\Big) X_2  \, ; \quad
\Big(\nabla_{X_1}X_1\Big)( \ln \mu) = \tfrac{1}{2} \Big( X_2 \ln \mu\Big)^2 ;\\
(\omega^{1}_{2}(X_1))^2 &= \tfrac{1}{4} \Big( X_2 \ln \mu\Big)^2 = \tfrac{1}{2}\Big(\nabla_{X_1}X_1\Big)( \ln \mu) ,
\end{align*}
while
\begin{align*}
\nabla_{X_2}X_2 &= \tfrac{1}{2} \Big( X_1 \ln \mu\Big) X_1 \, ; \quad
\Big(\nabla_{X_2}X_2\Big)( \ln \mu) = \tfrac{1}{2} \Big( X_1 \ln \mu\Big)^2 ;\\
(\omega^{1}_{2}(X_2))^2 &= \tfrac{1}{4} \Big( X_1 \ln \mu\Big)^2 = \tfrac{1}{2}\Big(\nabla_{X_2}X_2\Big)( \ln \mu) .
\end{align*}
Therefore
$$ \Delta \ln\mu = -2 K^{M}.$$
Since 
$$ \left[ \tfrac{1}{\sqrt{\mu}} X_1 , \tfrac{1}{\sqrt{\mu}} X_2 \right] =0,$$
there exist coordinate functions $(x,y)$ on $U$, such that $\frac{\partial}{\partial x} = \frac{1}{\sqrt{\mu}} X_1 $ and $\frac{\partial}{\partial y} = \frac{1}{\sqrt{\mu}} X_2 $. Moreover, the normal part of the biharmonicity equation:
$$ \Delta^{\perp} H + \tr B(.,A_H .) + \tr (R^N (.,H). )^\perp =0 , $$
becomes, when $H$ is constant,
$$ |\nabla^\perp H|^2 + |A_H|^2 - \sum_{i=1}^2 R^N (X_i ,H, X_i, H) =0$$
and, since
\begin{align*}
\lambda_1 + \lambda_2 &= 2 |H|^2  \mbox{ and } \, \lambda^2_1 + \lambda^2_2 =  |A_H|^2 ,
\end{align*}
we deduce that
$$|A_H|^2 - 2|H|^4 = \frac{(\lambda_1 - \lambda_2)^2}{2},$$
hence
$$\lambda_1 - \lambda_2 = \sqrt{2}\sqrt{ \sum_{i=1}^2 R^N (X_i ,H, X_i, H) -|\nabla^\perp H|^2 -2 |H|^4 }.$$
\end{proof}

\begin{remark}
i) If $n=3$, we can replace $\sum_{i=1}^2 R^N (X_i ,H, X_i, H)$ by $\ricci^{N}(H,H)$.\\
ii) Let $\phi : (M^2 ,g) \to (N,h)$ be a proper-biharmonic Riemannian immersion with constant mean curvature. If $(M^2 ,g)$ is complete and has no pseudo-umbilical point then its universal cover is (globally) conformally equivalent to $\rn^2$.
\end{remark}

\begin{corollary}\label{cor4}
Let $\phi : (M^2 ,g) \to N^3 (c)$ be a proper-biharmonic Riemannian immersion with constant mean curvature in a three dimensional real space form. Then it is umbilical.
\end{corollary}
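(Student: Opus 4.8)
The plan is to argue by contradiction: suppose $\phi$ is not umbilical, so there exists a point $p$ with $\mu(p)>0$, and apply Theorem~\ref{thm2} on the neighbourhood $U$ of non pseudo-umbilical points it furnishes. The strategy is to evaluate every ambient curvature quantity occurring in that theorem using the special form of $R^N$ in a space form, and to exploit that the codimension here is one.

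First I would record the two simplifications coming from the space form structure. Since $N^3(c)$ has constant sectional curvature $c$, its curvature tensor satisfies $R^N(X,Y)Z = c(\la Y,Z\ra X - \la X,Z\ra Y)$, so, as each $X_i$ is tangent and $H$ is normal, $R^N(X_i,H,X_i,H) = c|H|^2$ and hence $\sum_{i=1}^2 R^N(X_i,H,X_i,H) = 2c|H|^2$. Secondly, because the normal bundle is a line bundle, writing $H = |H|\xi$ with $\xi$ a unit normal and $|H|$ constant, the section $\nabla^{\perp}\xi$ is normal and orthogonal to $\xi$, hence vanishes, so that $\nabla^{\perp}H = 0$ and $|\nabla^{\perp}H|^2 = 0$.

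Next I would feed these into the two identities of Theorem~\ref{thm2}. The quantity under the square root (and the logarithm) becomes $\sum_{i=1}^2 R^N(X_i,H,X_i,H) - |\nabla^{\perp}H|^2 - 2|H|^4 = 2|H|^2(c-|H|^2)$, which is a \emph{constant} on $U$ since $|H|$ is constant. Consequently the Laplacian of its logarithm vanishes, and the identity $\Delta\ln(\cdots) = -4K^M$ forces $K^M\equiv 0$ on $U$. On the other hand, the sectional curvature $\riem^N(X_1,X_2)$ equals $c$, so the Gauss equation of Theorem~\ref{thm2} reads $c = K^M - 2|H|^2 + \tfrac{1}{2|H|^2}(2c|H|^2)$, that is $K^M = 2|H|^2$. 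As $\phi$ is proper-biharmonic, the tension field $\tau(\phi) = 2H$ is nowhere zero, so $|H|^2 > 0$ and $K^M = 2|H|^2 > 0$, contradicting $K^M\equiv 0$.

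The contradiction shows $U$ cannot exist, so $\mu\equiv 0$, i.e. $\lambda_1 = \lambda_2 = |H|^2$ everywhere and $A_H = |H|^2\,\Id$; the immersion is pseudo-umbilical. In codimension one this is precisely umbilicity: with $H = |H|\xi$ and $A_H = |H|A_{\xi}$, the relation $A_H = |H|^2\,\Id$ gives $A_{\xi} = |H|\,\Id$, so $\phi$ is totally umbilical. The only genuine work lies in the two curvature evaluations and the observation that $\nabla^{\perp}H = 0$; once the argument of the logarithm is seen to be constant, the clash between the log-Laplacian formula and the Gauss equation is immediate, so I anticipate no serious obstacle beyond keeping track of the sign conventions for $R^N$.
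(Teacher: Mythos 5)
Your proof is correct and follows essentially the same route as the paper: apply the two identities of Theorem~\ref{thm2} around a hypothetical non-umbilical point, use $\ricci^N(H,H)=2c|H|^2$ (equivalently $\sum_i R^N(X_i,H,X_i,H)=2c|H|^2$) to make the argument of the logarithm constant so that $K^M\equiv 0$, and contradict this with the Gauss equation giving $K^M=2|H|^2>0$. You are in fact slightly more explicit than the paper on two points it leaves implicit — that $\nabla^{\perp}H=0$ in codimension one with $|H|$ constant, and that pseudo-umbilical implies umbilical when $n=3$ — which is a welcome addition, not a deviation.
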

\begin{proof}
If there exists a non-umbilical point $p_0 \in M$, then, around $p_0$, we have
$$
\riem^N (X_1,X_2) = K^M -2 |H|^2- \frac{1}{2|H|^2}\ricci^N (H,H)
$$
and
$$
K^M = -\frac{1}{4} \Delta \ln \Big( \ricci^N (H,H) - 2 |H|^4\Big) ,
$$
but $\ricci^N (H,H) = 2c |H|^2$ is constant, so $K^M$ is zero. On the other hand, the first equation implies that
$ c= K^M - 2 |H|^2 + c$, which contradicts $K^M=0$.
\end{proof}

As the formulas for $\lambda_1$ and $\lambda_2$ in Theorem~\ref{thm2} remain valid also for pseudo-umbilical points, we deduce.
\begin{corollary}
Let $\phi : (M^2 ,g) \to (N^3,h)$ be a proper-biharmonic Riemannian immersion with constant mean curvature. Assume that there exists $c>0$ such that $\ricci^N (U,U) \geq c |U|^2$ with $|H|^2 \in (0,\frac{c}{2})$. Then $M^2$ has no pseudo-umbilical point.
\end{corollary}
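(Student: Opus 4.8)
The plan is to argue by contradiction, combining the formulas of Theorem~\ref{thm2} at a putative pseudo-umbilical point with the special features of codimension one. Suppose $M^2$ has a pseudo-umbilical point $p$, so $\lambda_1(p) = \lambda_2(p)$ and $\mu(p) = 0$. Since the expressions for $\lambda_1$ and $\lambda_2$ remain valid at such points, the vanishing of $\mu = \lambda_1 - \lambda_2$ forces the quantity under the square root to vanish at $p$:
\begin{equation*}
\sum_{i=1}^2 R^N(X_i, H, X_i, H) - |\nabla^\perp H|^2 - 2|H|^4 = 0 .
\end{equation*}
Equivalently, this is the normal part of the biharmonicity equation evaluated at $p$, where pseudo-umbilicity gives $A_H = |H|^2\,\Id$ and hence $|A_H|^2 = 2|H|^4$.

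First I would exploit the codimension. As $N$ is three-dimensional and $M$ a surface, the normal bundle has rank one; choosing locally $\xi = H/|H|$ as unit normal, with $|H|$ constant, one has $\nabla^\perp_X H = |H|\,\nabla^\perp_X \xi$. Since $\nabla^\perp$ is metric on the normal bundle, $\la \nabla^\perp_X \xi, \xi\ra = \tfrac{1}{2} X\la \xi,\xi\ra = 0$; as $\nabla^\perp_X \xi$ is a section of the rank-one bundle spanned by $\xi$ yet orthogonal to $\xi$, it must vanish, so $\nabla^\perp H = 0$. Next I would invoke the remark following Theorem~\ref{thm2}: in dimension three $\sum_{i=1}^2 R^N(X_i,H,X_i,H) = \ricci^N(H,H)$, the missing term $R^N(H/|H|, H, H/|H|, H)$ being zero by antisymmetry. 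Substituting both facts into the displayed identity collapses it to
\begin{equation*}
\ricci^N(H,H) = 2|H|^4 \quad \text{at } p .
\end{equation*}

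To finish, I would feed in the curvature hypothesis $\ricci^N(U,U) \geq c|U|^2$ with $U = H$, giving $2|H|^4 = \ricci^N(H,H) \geq c|H|^2$. Dividing by $|H|^2 > 0$ yields $2|H|^2 \geq c$, i.e.\ $|H|^2 \geq \tfrac{c}{2}$, contradicting $|H|^2 \in (0,\tfrac{c}{2})$. Hence no pseudo-umbilical point can exist. The argument is a short deduction once the two structural simplifications are in place; the only delicate point is the vanishing of $\nabla^\perp H$, which is precisely where codimension one ($n=3$) is genuinely used — in higher codimension the term $|\nabla^\perp H|^2$ need not vanish and this conclusion would fail.
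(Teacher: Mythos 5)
Your proof is correct and is essentially the argument the paper intends: the corollary is stated as an immediate deduction from the formulas of Theorem~\ref{thm2} extended to pseudo-umbilical points, together with the remark that for $n=3$ one has $\sum_{i=1}^2 R^N(X_i,H,X_i,H)=\ricci^N(H,H)$, and the codimension-one vanishing $\nabla^\perp H=0$ for constant $|H|>0$ --- all of which you reproduce, correctly identifying the last point as where $n=3$ is genuinely used. Your contradiction at a single pseudo-umbilical point is just the contrapositive of the paper's direct reading that $\mu^2=2\bigl(\ricci^N(H,H)-2|H|^4\bigr)\geq 2\bigl(c|H|^2-2|H|^4\bigr)>0$ everywhere.
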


\begin{corollary}
Let $\phi : (M^2 ,g) \to (N^n ,h)$ be a proper-biharmonic Riemannian immersion with constant mean curvature. Assume $M$ is compact, oriented and has no pseudo-umbilical point, then $M$ is topologically a torus.
\end{corollary}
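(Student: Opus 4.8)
The plan is to reduce the statement to a Gauss--Bonnet computation, exploiting the global relation $\Delta \ln \mu = -2 K^M$ that was established inside the proof of Theorem~\ref{thm2}. The hypothesis of no pseudo-umbilical point is exactly what is needed to upgrade that local computation into a global identity.

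First I would check that $\mu = \lambda_1 - \lambda_2$ is a smooth, strictly positive function on all of $M$. By assumption there are no pseudo-umbilical points, so $\lambda_1 > \lambda_2$ at every point; since $A_H$ is a smooth self-adjoint field whose two eigenvalues remain uniformly separated, $\lambda_1$ and $\lambda_2$ are globally smooth (not merely continuous, as would be the case near a pseudo-umbilical point), and hence $\mu \in C^\infty(M)$ with $\mu > 0$ everywhere. Consequently $\ln \mu$ is a well-defined smooth function on the whole of $M$, and the relation $\Delta \ln \mu = -2 K^M$ from Theorem~\ref{thm2}, derived there from the Codazzi and Gauss equations together with the biharmonicity condition, holds globally.

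Next I would integrate this identity over the compact surface $M$. Because $\ln \mu$ is smooth on the closed manifold $M$, the divergence theorem yields $\int_M \Delta \ln \mu \, v_g = 0$, so that $\int_M K^M \, v_g = 0$. Applying Gauss--Bonnet, $\int_M K^M \, v_g = 2\pi \chi(M)$, forces $\chi(M) = 0$, and a compact oriented surface of vanishing Euler characteristic is topologically a torus. The argument presents no genuine obstacle; the only point requiring care is precisely the global regularity and positivity of $\mu$ under the no-pseudo-umbilical-point hypothesis, which is what allows $\ln \mu$ to be globally defined and the relation $\Delta \ln \mu = -2 K^M$ to be integrated over $M$. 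Once that is secured, the conclusion is immediate.
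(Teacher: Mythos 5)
Your proof is correct and is precisely the argument the paper intends: the corollary is stated without proof as a direct consequence of Theorem~\ref{thm2}, whose identity $\Delta\ln\mu=-2K^M$ (equivalently the displayed $\Delta\ln(\,\cdot\,)=-4K^M$) becomes global exactly because the absence of pseudo-umbilical points makes $\mu=\lambda_1-\lambda_2=\sqrt{2|A_H|^2-4|H|^4}$ smooth and positive on all of $M$, after which integration and Gauss--Bonnet give $\chi(M)=0$. Your attention to the global smoothness of $\mu$ is the right point of care, and the rest matches the paper's route exactly.
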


\begin{corollary}
Let $\phi : (M^2 ,g) \to (N^n ,h)$ be a proper-biharmonic Riemannian immersion. Assume that $\lambda_1$ and $\lambda_2$ are constant, then $\nabla A_H =0$ and $M$ is flat or pseudo-umbilical.
\end{corollary}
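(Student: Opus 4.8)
The plan is to split into two cases according to whether $M$ carries a non pseudo-umbilical point, and to recycle the formulas established inside the proof of Theorem~\ref{thm2}. First I would note that, since $\lambda_1 + \lambda_2 = 2|H|^2$ and both principal curvatures are assumed constant, the norm $|H|^2$ is constant: the immersion is therefore CMC and the full machinery of Theorem~\ref{thm2} is at our disposal. Consequently $\mu = \lambda_1 - \lambda_2 \geq 0$ is a nonnegative constant, which produces a clean dichotomy: either $\mu \equiv 0$, or $\mu$ is a strictly positive constant.

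If $\mu \equiv 0$, then $\lambda_1 = \lambda_2$ everywhere, so $A_H = \lambda_1 \Id$ with $\lambda_1$ constant; thus $M$ is pseudo-umbilical and $\nabla A_H = 0$ is immediate.

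For the remaining case $\mu > 0$, there are no pseudo-umbilical points and the conclusions of Theorem~\ref{thm2} hold on all of $M$. Here I would first invoke the identity $\Delta \ln \mu = -2 K^M$ from that proof: constancy of $\mu$ makes the left-hand side vanish, forcing $K^M = 0$, so $M$ is flat. To reach $\nabla A_H = 0$, I would return to the connection one-form $\omega^1_2$ of the principal frame $\{X_1, X_2\}$, for which the proof of Theorem~\ref{thm2} supplies $\omega^1_2(X_1) = -\tfrac{1}{2}\tfrac{X_2 \mu}{\mu}$ and $\omega^1_2(X_2) = \tfrac{1}{2}\tfrac{X_1 \mu}{\mu}$. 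Since $\mu$ is constant, both components vanish, hence $\omega^1_2 = 0$ and the frame is parallel, $\nabla X_1 = \nabla X_2 = 0$. Feeding this, together with the constancy of the $\lambda_i$, into $(\nabla_{X_k} A_H)(X_i) = (X_k \lambda_i) X_i + \lambda_i \nabla_{X_k} X_i - A_H(\nabla_{X_k} X_i)$ gives $\nabla A_H = 0$, completing the argument in this branch.

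The calculation is essentially routine once the case split is in place, and I do not expect a serious obstacle. The only point deserving care is the dichotomy itself: it is precisely the hypothesis that $\lambda_1$ and $\lambda_2$ are globally, smoothly constant that excludes a mixed configuration in which $\mu$ vanishes only on a proper subset, and this is what allows Theorem~\ref{thm2} to be applied either everywhere (in the case $\mu > 0$) or not at all (in the pseudo-umbilical case).
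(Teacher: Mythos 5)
Your proof is correct and follows exactly the route the paper intends: the corollary is stated without proof as an immediate consequence of Theorem~\ref{thm2}, and your argument---deducing CMC from $\lambda_1+\lambda_2=2|H|^2$, splitting on the constant $\mu=\lambda_1-\lambda_2$, and in the case $\mu>0$ using $\Delta\ln\mu=-2K^M$ for flatness and the vanishing of $\omega^1_2$ for $\nabla A_H=0$---is precisely the deduction from the formulas in that theorem's proof. Your care in noting that constancy of both $\lambda_i$ rules out a mixed configuration where $\mu$ vanishes only on a proper subset is exactly the point that makes the dichotomy legitimate.
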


If $M$ is not compact, we need some assumption on the curvature of the target space (cf. also \cite[Prop. 4.6 and 4.7]{Fetcu}).
\begin{proposition}\label{prop3}
Let $\phi : (M^2 ,g) \to (N^n ,h)$ be a proper-biharmonic Riemannian immersion with constant mean curvature. Assume $M$ is non-compact, complete and $K^{M}$ is non-negative. Assume that $\riem^{N}\leq K_0$ where $K_0>0$ (in the sense that $R^{N}(U,V,U,V)\leq K_0$ for all $\{U,V\}$ orthonormal). Then $\nabla A_H =0$ and $M$ is flat or pseudo-umbilical.
\end{proposition}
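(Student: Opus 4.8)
The plan is to convert the integral identity of Theorem~\ref{thm1} into a pointwise Bochner inequality and then substitute the divergence theorem by a parabolicity argument, the role of compactness being played jointly by completeness, the sign of $K^{M}$, and the ambient curvature bound. Since $\phi$ is a CMC Riemannian immersion, $\tau(\phi)=2H$ has constant norm, so $|\tau(\phi)|^2$ is constant and Proposition~\ref{prop2} collapses to
\begin{equation*}
\Delta^{R}S_2 = -2K^{M}S_2 + K^{M}|\tau(\phi)|^2 g .
\end{equation*}
Taking the inner product with $S_2$, using $\tr_{g}S_2 = |\tau(\phi)|^2$ and the Bochner formula $\tfrac12\Delta|S_2|^2 = \la \Delta^{R}S_2 , S_2\ra - |\nabla S_2|^2$, I would set
\begin{equation*}
u := |S_2|^2 - \tfrac{|\tau(\phi)|^4}{2}\ge 0 ,
\end{equation*}
which is nonnegative by Remark~\ref{remark1} and smooth on all of $M$ (being built from $|S_2|^2$). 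Since $|\tau(\phi)|^4$ is constant, this yields
\begin{equation*}
\tfrac12 \Delta u = -2K^{M} u - |\nabla S_2|^2 \le 0 ,
\end{equation*}
so that, with the paper's sign convention, $u$ is subharmonic because $K^{M}\ge 0$ and $u\ge 0$.

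The crucial step, where completeness must compensate for the missing boundary term, is an a priori bound on $u$, and this is where the hypothesis $\riem^{N}\le K_0$ enters. Expanding $S_2 = -2|H|^2 g + 4A_H$ gives $u = 16\big(|A_H|^2 - 2|H|^4\big)$. The normal part of the biharmonicity equation for a CMC immersion, derived in the proof of Theorem~\ref{thm2}, reads $|A_H|^2 = \sum_{i} R^{N}(X_i,H,X_i,H) - |\nabla^{\perp}H|^2$, whence $|A_H|^2 \le \sum_i R^{N}(X_i,H,X_i,H)$. Writing $H = |H|\nu$ with $\nu$ a unit normal and applying $R^{N}(X_i,\nu,X_i,\nu)\le K_0$ to the orthonormal pairs $\{X_i,\nu\}$ gives $\sum_i R^{N}(X_i,H,X_i,H) \le 2K_0|H|^2$, hence
\begin{equation*}
0\le u \le 32K_0|H|^2 ,
\end{equation*}
a constant since $|H|$ is constant. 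I expect this estimate to be the main obstacle: it is the only place the geometry of the target is used, and it is precisely what turns the absence of a divergence term into a workable uniform bound.

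Finally, a complete non-compact surface with $K^{M}\ge 0$ has at most quadratic volume growth (Bishop--Gromov in dimension two) and is therefore parabolic, so every bounded-above subharmonic function is constant. Thus $u$ is constant, $\Delta u = 0$, and the inequality forces both nonnegative terms on the right to vanish: $|\nabla S_2|^2\equiv 0$ and $K^{M}u\equiv 0$. Since $|H|$ is constant we have $\nabla S_2 = 4\nabla A_H$, so $\nabla A_H = 0$. If $u\equiv 0$ then $S_2 = \tfrac{|\tau(\phi)|^2}{2}g$, i.e.\ $\phi$ is pseudo-umbilical; otherwise $u$ is a positive constant and $K^{M}u\equiv 0$ forces $K^{M}\equiv 0$, i.e.\ $M$ is flat. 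This produces the stated dichotomy, with the two alternatives genuinely able to occur together, as noted in the introduction.
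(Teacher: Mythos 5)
Your proposal is correct and takes essentially the same route as the paper: the collapsed formula of Proposition~\ref{prop2} under constant $|\tau(\phi)|^2$, the bound $|A_H|^2 \leq 2K_0|H|^2$ extracted from the normal part of the biharmonic equation, and parabolicity of the complete surface with $K^M \geq 0$ applied to a bounded-above subharmonic function, then splitting $\nabla A_H = 0$ and the dichotomy from the vanishing of $K^M\big(|A_H|^2 - 2|H|^4\big)$ and $|\nabla S_2|^2$. The only differences are cosmetic — you work with $u = |S_2|^2 - \tfrac{|\tau(\phi)|^4}{2}$, which differs from the paper's $|S_2|^2$ by a constant — and your normalization $u = 16\big(|A_H|^2 - 2|H|^4\big)$ is in fact the correct one (the paper's final display reads $|A_H|^2 - 4|H|^4$, a typo, since pseudo-umbilicity in dimension two corresponds to $|A_H|^2 = 2|H|^4$).
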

\begin{proof}
By the previous formulas for the Laplacian of $S_2$, we have
\begin{align*}
-\tfrac{1}{2} \Delta |S_2|^2 &= -\la \Delta^{R}S_2 , S_2\ra + |\nabla S_2|^2\\
&=  K^{M}\left(2|S_2|^2 -|\tau(\phi)|^4\right) + |\nabla S_2|^2 ,
\end{align*}
which must be non-negative (Remark~\ref{remark1}), therefore $|S_2|^2$ is a subharmonic function and bounded from above since, for Riemannian immersions, $|S_2|^2 = 8( 2|A_H|^2 -3 |H|^4)$ and $|A_H|^2$ is itself bounded from above.
Indeed, if $\phi$ is biharmonic then
$$\Delta^{\perp}H + \tr B(.,A_{H}.) + \tr (R^{N}(.,H).)^{\perp} =0 ,$$
and
\begin{align*}
&\la\Delta^{\perp}H, H\ra =- |A_{H}|^2 + \sum_{i=1}^{2} R^{N}(X_i,H,X_i,H) \\
\end{align*}
but as $|H|$ is constant $\la\Delta^{\perp}H, H\ra = |\nabla^{\perp} H|^2$,
therefore
\begin{align*}
|A_{H}|^2 &= -|\nabla^{\perp} H|^2 + \sum_{i=1}^{2} R^{N}(X_i,H,X_i,H) \\
&\leq \sum_{i=1}^{2} R^{N}(X_i,H,X_i,H) \leq 2|H|^2 K_0
\end{align*}
and $|A_{H}|^2 \leq 2 K_0 |H|^2$.
As $M$ is complete with $K^M$ non-negative, it is parabolic and $|S_2|^2$, a subharmonic function bounded from above, must be constant:
$$
K^{M}\left(|A_{H}|^2 -4|H|^4\right) = 0 ,
$$
while $\nabla A_{H} =0$, in particular, $|A_{H}|^2$ is constant.
\end{proof}

\begin{remark}
When the dimension of the target is three, we can replace the curvature condition by a upper bound on the Ricci tensor.
\end{remark}

The Hopf Theorem~\cite{Hopf} shows that a compact simply-connected surface of constant mean curvature immersed in a three-dimensional Euclidean space must be umbilical, hence an embedded round sphere, and the condition of biharmonicity allows to extend this to any codomain. This result has some strict implications on the set of pseudo-umbilical points and hints at the difficulties of working with non constant mean curvature surfaces. An interesting parallel has to be drawn with \cite{Fetcu}.

 \begin{theorem} \label{thm3}
Let $\phi : (M^2 ,g) \to (N^n ,h)$ be a proper-biharmonic Riemannian immersion with mean curvature vector field $H$, $M^2$ oriented. Let $z$ be a complex coordinate on $M^2$ then the function $\la B(\partial z,\partial z),H\ra$ is holomorphic if and only if the norm of $H$ is constant.
\end{theorem}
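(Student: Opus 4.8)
The plan is to fix an isothermal coordinate $z=x+iy$ for the induced conformal (Riemann surface) structure on the oriented surface $M^2$, write $g=\rho\,(dx^2+dy^2)$ with $\rho>0$, and prove the single identity
$$\partial_{\bar z}\psi=-\tfrac{\rho}{4}\,\partial_z(|H|^2),\qquad \psi:=\la B(\partial_z,\partial_z),H\ra ,$$
where $\partial_z=\partial z$ and $\partial_{\bar z}=\partial\bar z$ are the Cauchy--Riemann fields. The theorem is then immediate: since $\rho>0$, holomorphicity $\partial_{\bar z}\psi=0$ is equivalent to $\partial_z(|H|^2)=0$, and as $|H|^2$ is real this forces $|H|^2$, hence $|H|$, to be locally --- and by connectedness globally --- constant, the converse being trivial.

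First I would record the coordinate facts needed. In an isothermal chart the only non-vanishing Christoffel symbols are $\nabla_{\partial_z}\partial_z=(\partial_z\log\rho)\partial_z$ and its conjugate, so $\nabla_{\partial_{\bar z}}\partial_z=0$ and $\nabla_{\partial_z}\partial_{\bar z}=0$. Tracing $B$ gives $\tr_g B=\tfrac{4}{\rho}B(\partial_z,\partial_{\bar z})$, whence
$$B(\partial_z,\partial_{\bar z})=\tfrac{\rho}{2}H ,$$
the bridge between the mixed $(1,1)$-part of $B$ and the mean curvature. Because the tangential connection terms on $\partial_z$ drop out, differentiating $\psi$ yields
$$\partial_{\bar z}\psi=\la(\bar\nabla_{\partial_{\bar z}}B)(\partial_z,\partial_z),H\ra+\la B(\partial_z,\partial_z),\nabla^{\perp}_{\partial_{\bar z}}H\ra ,$$
where $\bar\nabla$ is the connection on the bundle carrying $B$.

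Next I would transpose the derivative slot by the Codazzi equation,
$$(\bar\nabla_{\partial_{\bar z}}B)(\partial_z,\partial_z)=(\bar\nabla_{\partial_z}B)(\partial_{\bar z},\partial_z)+\big(R^{N}(\partial_{\bar z},\partial_z)\partial_z\big)^{\perp} .$$
The first term on the right is evaluated from $B(\partial_z,\partial_{\bar z})=\tfrac{\rho}{2}H$: expanding $\bar\nabla_{\partial_z}\big(B(\partial_z,\partial_{\bar z})\big)$ and using $\nabla_{\partial_z}\partial_z=(\partial_z\log\rho)\partial_z$, the two terms proportional to $\partial_z\rho$ cancel, leaving $(\bar\nabla_{\partial_z}B)(\partial_{\bar z},\partial_z)=\tfrac{\rho}{2}\nabla^{\perp}_{\partial_z}H$. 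Since $\la\nabla^{\perp}_{\partial_z}H,H\ra=\tfrac12\partial_z(|H|^2)$, this contributes $\tfrac{\rho}{4}\partial_z(|H|^2)$, so that
$$\partial_{\bar z}\psi=\tfrac{\rho}{4}\partial_z(|H|^2)+\la R^{N}(\partial_{\bar z},\partial_z)\partial_z,H\ra+\la B(\partial_z,\partial_z),\nabla^{\perp}_{\partial_{\bar z}}H\ra .$$

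The crux, and the only place where biharmonicity enters, is to show that the last two terms combine into a further multiple of $\partial_z(|H|^2)$. Writing $e_1=\rho^{-1/2}\partial_x$, $e_2=\rho^{-1/2}\partial_y$ and $\partial_z=\tfrac{\sqrt\rho}{2}(e_1-ie_2)$, I would expand both terms in this orthonormal frame. The symmetries of $R^{N}$ show that the seemingly normal quantity $\la R^{N}(\partial_{\bar z},\partial_z)\partial_z,H\ra$ is in fact assembled from the \emph{tangential} components $\la R^{N}(e_i,H)e_i,e_j\ra$ --- precisely the entries of $\tr\big(R^{N}(\cdot,H)\cdot\big)^{\top}$ appearing in the tangential part of the biharmonic equation,
$$\tr A_{\nabla^{\perp}_{\cdot}H}(\cdot)+\tr\big(R^{N}(\cdot,H)\cdot\big)^{\top}=-\tfrac12\,\grad(|H|^2) ,$$
which reduces to the relation used in Theorem~\ref{thm2} when $|H|$ is constant. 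Substituting the two scalar components of this equation, together with the elementary identity $\la B(e_1,e_1)+B(e_2,e_2),\nabla^{\perp}_{e_k}H\ra=e_k(|H|^2)$, produces a complete cancellation --- the real and imaginary contributions reducing to $-e_1(|H|^2)$ and $e_2(|H|^2)$ respectively --- leaving $\partial_{\bar z}\psi=-\tfrac{\rho}{4}\partial_z(|H|^2)$, as required. The main obstacle is purely organisational: keeping track of this cancellation through the passage from the complex frame $\{\partial_z,\partial_{\bar z}\}$ to the real orthonormal frame $\{e_1,e_2\}$. The conceptual content is that the tangential biharmonic equation is exactly the identity needed to absorb the ambient curvature term which, over a general target, would otherwise obstruct holomorphicity.
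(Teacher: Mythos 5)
Your proposal is correct and is essentially the paper's own argument: both fix an isothermal chart, use the Codazzi equation to transpose the derivative slot in $\la B(\partial z,\partial z),H\ra$, and invoke the tangential part of the biharmonic equation to absorb the ambient curvature term, arriving at the identity $\partial_{\bar z}\la B(\partial z,\partial z),H\ra=-\tfrac{\rho}{4}\,\partial_z(|H|^2)$, which is exactly the paper's computation $A=-\tfrac12\lambda^2\partial_x(|H|^2)$, $B=\tfrac12\lambda^2\partial_y(|H|^2)$ written in complex form. Your only deviation is organisational --- you work throughout in the complexified frame $\{\partial_z,\partial_{\bar z}\}$ with $B(\partial_z,\partial_{\bar z})=\tfrac{\rho}{2}H$, where the paper expands in real coordinates and treats the real and imaginary parts separately --- and your claimed cancellations (the $\partial_z\rho$ terms in the Codazzi step, and the reduction of the curvature plus $\nabla^{\perp}H$ terms via the tangential equation) all check out with the paper's curvature conventions.
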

\begin{proof}
Let $\phi : (M^2 ,g) \to (N^n ,h)$ be a proper-biharmonic Riemannian immersion with mean curvature $H$. Then the tangent part of the biharmonic equation is:
$$ \grad \frac{|H|^2}{2} + \tr A_{\nabla^{\perp}_{.}H}(.) + \tr (R^{N}(d\phi(.),H)d\phi(.))^{T} =0 .$$
Assume $M^2$ is orientable then $M^2$ is a one-dimensional complex manifold. Let $g= \lambda^2 \Big( dx^2 + dy^2\Big)$ and
\begin{align*}
&\frac{1}{2} \partial_x (|H|^2) \partial_x+ \frac{1}{2} \partial_y (|H|^2)\partial_x +A_{\nabla^{\perp}_{\partial x}H}(\partial x) + A_{\nabla^{\perp}_{\partial y}H}(\partial y)\\
&+ (R^{N}(\partial x,H)\partial x+ R^{N}(\partial y,H)\partial y)^{T} =0 ,
\end{align*}
therefore
\begin{align*}
& \frac{\lambda^2}{2} \partial_x (|H|^2) + \la A_{\nabla^{\perp}_{\partial x}H}(\partial x),\partial x\ra + \la A_{\nabla^{\perp}_{\partial y}H}(\partial y),\partial x\ra
+ R^{N}(\partial y,H,\partial x,\partial y) =0 ,
\end{align*}
and
\begin{align*}
& \frac{\lambda^2}{2} \partial_y (|H|^2) +\la A_{\nabla^{\perp}_{\partial x}H}(\partial x),\partial y\ra + \la A_{\nabla^{\perp}_{\partial y}H}(\partial y),\partial y\ra
+ R^{N}(\partial x,H,\partial y ,\partial x) =0 ,
\end{align*}
which is equivalent to
\begin{align}\label{eq*}
& \frac{\lambda^2}{2} \partial_x (|H|^2) +\la B(\partial x,\partial x),\nabla^{\perp}_{\partial x}H\ra + \la B(\partial x,\partial y),\nabla^{\perp}_{\partial y}H\ra
+ R^{N}(\partial y,H,\partial x,\partial y)=0 ,
\end{align}
and
\begin{align}\label{eq2*}
& \frac{\lambda^2}{2} \partial_y (|H|^2) +\la B(\partial y,\partial x),\nabla^{\perp}_{\partial x}H\ra+ \la B(\partial y,\partial y),\nabla^{\perp}_{\partial y}H\ra
+ R^{N}(\partial x,H,\partial y ,\partial x) =0 .
\end{align}
Since $\partial z = \tfrac{1}{2} (\partial x - i \partial y)$ and $\partial \bar{z} = \tfrac{1}{2} (\partial x + i \partial y)$ we see that
$$B(\partial z,\partial z) = \tfrac{1}{2} ( \lambda^2 H - B(\partial y,\partial y) - i B(\partial x,\partial y) )$$
and 
$$\la B(\partial z,\partial z),H\ra = \tfrac{1}{2} ( \lambda^2 |H|^2 - \la B(\partial y,\partial y),H\ra - i \la B(\partial x,\partial y),H\ra ) .$$
Next we compute $\partial_{\bar{z}}\la B(\partial z,\partial z),H\ra$:
\begin{align*}
&(\partial x + i \partial y)\Big( \lambda^2 |H|^2 - \la B(\partial y,\partial y),H\ra - i \la B(\partial x,\partial y),H\ra \Big)\\
&= 2\lambda \frac{\partial\lambda}{\partial x} |H|^2 +\lambda^2 \partial_x (|H|^2) - \la \nabla^{\perp}_{\partial x} B(\partial y,\partial y),H\ra 
- \la B(\partial y,\partial y), \nabla^{\perp}_{\partial x} H\ra \\
&+\la \nabla^{\perp}_{\partial y} B(\partial x,\partial y),H\ra + \la B(\partial x,\partial y), \nabla^{\perp}_{\partial y} H\ra\\
&+ i \Big\{ 2\lambda \frac{\partial\lambda}{\partial y} |H|^2 +\lambda^2 \partial_y (|H|^2)
- \la \nabla^{\perp}_{\partial y} B(\partial y,\partial y),H\ra 
- \la B(\partial y,\partial y), \nabla^{\perp}_{\partial y} H\ra \\
&-\la \nabla^{\perp}_{\partial x} B(\partial x,\partial y),H\ra - \la B(\partial x,\partial y), \nabla^{\perp}_{\partial x} H\ra \Big\} \\
&= A+ iB .
\end{align*}
With Equation~\eqref{eq*}
\begin{align*}
A&= 2\lambda \frac{\partial\lambda}{\partial x} |H|^2 +\frac{1}{2}\lambda^2 \partial_x (|H|^2) - \la \nabla^{\perp}_{\partial x} B(\partial y,\partial y),H\ra 
- \la B(\partial y,\partial y), \nabla^{\perp}_{\partial x} H\ra \\
&+\la \nabla^{\perp}_{\partial y} B(\partial x,\partial y),H\ra 
- \la B(\partial x,\partial x), \nabla^{\perp}_{\partial x} H\ra - R(\partial y ,H,\partial x,\partial y)\\
&= 2\lambda \frac{\partial\lambda}{\partial x} |H|^2 +\frac{1}{2}\lambda^2 \partial_x (|H|^2) - \la \nabla^{\perp}_{\partial x} B(\partial y,\partial y),H\ra 
+\la \nabla^{\perp}_{\partial y} B(\partial x,\partial y),H\ra \\
&- \la 2\lambda^2 H, \nabla^{\perp}_{\partial x} H\ra - R(\partial y ,H,\partial x,\partial y)\\
&= 2\lambda \frac{\partial\lambda}{\partial x} |H|^2 -\frac{1}{2}\lambda^2 \partial_x (|H|^2) - \la \nabla^{\perp}_{\partial x} B(\partial y,\partial y),H\ra 
+\la \nabla^{\perp}_{\partial y} B(\partial x,\partial y),H\ra \\
&- R(\partial y ,H,\partial x,\partial y) .
\end{align*}
From the Codazzi equation
\begin{align*}
\la \nabla^{\perp}_{\partial y} B(\partial x,&\partial y),H\ra \\
&= \la (\nabla^{\perp}_{\partial y} B)(\partial x,\partial y),H\ra 
+ \la B(\nabla_{\partial y}\partial x,\partial y),H\ra +\la B(\partial x,\nabla_{\partial y}\partial y),H\ra \\
&= \la \nabla^{\perp}_{\partial x} B(\partial y,\partial y),H\ra -2 \la B(\nabla_{\partial x} \partial y,\partial y),H\ra 
+ R(\partial y,\partial x,H,\partial y)\\
&+ \la B(\nabla_{\partial y}\partial x,\partial y),H\ra +\la B(\partial x,\nabla_{\partial y}\partial y),H\ra ,
\end{align*}
therefore
\begin{align*}
A&= 2\lambda \frac{\partial\lambda}{\partial x} |H|^2 -\frac{1}{2}\lambda^2 \partial_x (|H|^2)- \la \nabla^{\perp}_{\partial x} B(\partial y,\partial y),H\ra 
+\la \nabla^{\perp}_{\partial x} B(\partial y,\partial y),H\ra \\
&- \la B(\nabla_{\partial x} \partial y,\partial y),H\ra 
+ \la B(\partial x,\nabla_{\partial y} \partial y),H\ra + R(\partial y,\partial x,H,\partial y)- R(\partial y ,H,\partial x,\partial y) \\
&= 2\lambda \frac{\partial\lambda}{\partial x} |H|^2 -\frac{1}{2}\lambda^2 \partial_x (|H|^2)
- \la B(\tfrac{1}{\lambda}(\tfrac{\partial\lambda}{\partial y}\partial x+\tfrac{\partial\lambda}{\partial x}\partial y),\partial y),H\ra \\
&+ \la B(\tfrac{1}{\lambda}(-\tfrac{\partial\lambda}{\partial x}\partial x+\tfrac{\partial\lambda}{\partial y}\partial y),\partial x),H\ra \\
&=-\frac{1}{2}\lambda^2 \partial_x (|H|^2) .
\end{align*}
Identical arguments for the imaginary part $B$, using~\eqref{eq2*},  yield
\begin{align*}
B &=\frac{1}{2}\lambda^2 \partial_y (|H|^2).
\end{align*}
\end{proof}

\begin{remark}
If $\phi : (M^2 ,g) \to (N^n ,h)$ is a proper-biharmonic Riemannian immersion with constant mean curvature, $M^2$ oriented. Then $\la B(\partial z,\partial z),H\ra dz^2$ is globally defined and, if $M^2$ has no pseudo-umbilical point, it is equal to $\frac{1}{4} dz^2$ and therefore $M^2$ is an affine manifold.
\end{remark}

\begin{corollary}
Let $\phi : (M^2 ,g) \to (N^n ,h)$ be a proper-biharmonic Riemannian immersion with constant mean curvature, $M^2$ oriented. If $M^2$ is not pseudo-umbilical then its pseudo-umbilical points are isolated.
\end{corollary}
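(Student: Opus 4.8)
The plan is to realise the pseudo-umbilical points as the zeros of a holomorphic object and then appeal to the identity principle. Since $\phi$ has constant mean curvature, $|H|$ is constant, so Theorem~\ref{thm3} applies and the locally defined function $\psi(z) = \la B(\partial z,\partial z),H\ra$ is holomorphic in every complex coordinate $z$. As recorded in the Remark preceding this corollary, $\psi\, dz^2$ is in fact a globally defined holomorphic quadratic differential on the Riemann surface $M^2$.

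The key step is to identify the zero set of $\psi$ with the pseudo-umbilical locus. Working in isothermal coordinates $g = \lambda^2(dx^2+dy^2)$ and using $\la B(X,Y),H\ra = \la A_H X, Y\ra$ together with $\partial z = \tfrac12(\partial x - i \partial y)$, a direct computation gives
$$\psi = \tfrac14\big( \la A_H\partial x,\partial x\ra - \la A_H \partial y,\partial y\ra\big) - \tfrac{i}{2}\la A_H \partial x, \partial y\ra .$$
Thus $\psi$ encodes precisely the $g$-traceless part of $A_H$. As the trace is prescribed, $\tr_g A_H = \lambda_1 + \lambda_2 = 2|H|^2$, one sees that $\psi(p)=0$ if and only if $A_H(p) = |H|^2 \Id$, i.e. $\lambda_1(p) = \lambda_2(p)$, which in the notation of Theorem~\ref{thm2} means $\mu(p) = 0$. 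Hence the zeros of $\psi$ are exactly the pseudo-umbilical points of $M^2$.

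Finally, the hypothesis that $M^2$ is not pseudo-umbilical furnishes a point at which $A_H \neq |H|^2 \Id$, so that $\psi$ does not vanish identically on the connected surface $M^2$. A nonzero holomorphic quadratic differential has only isolated zeros, by the identity theorem for holomorphic functions, and therefore the pseudo-umbilical points are isolated.

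The only point requiring care is the second step, namely checking that $\psi$ vanishes exactly on the pseudo-umbilical locus; but this reduces to the standard fact that the $(2,0)$-part of $\la B,H\ra$ measures the traceless part of the shape operator $A_H$, the full trace being fixed by the constancy of $|H|$. No genuine obstacle arises beyond this bookkeeping.
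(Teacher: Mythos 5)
Your proof is correct and follows exactly the route the paper intends for this corollary, which it leaves implicit: by Theorem~\ref{thm3} the Hopf-type differential $\la B(\partial z,\partial z),H\ra\,dz^2$ is holomorphic and, by the preceding Remark, globally defined, its zeros are precisely the pseudo-umbilical points (since $\tr_g A_H = 2|H|^2$ pins down the trace, your identification of $\psi$ with the traceless part of $A_H$ is the right bookkeeping), and the identity principle on the connected surface $M^2$ gives isolation. No gaps; your explicit verification that $\psi(p)=0$ if and only if $A_H(p)=|H|^2\,\Id$ is exactly the detail the paper omits.
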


Combining Theorem~\ref{thm3} with \cite[Lemma 1 page 59]{Kotani}, yields
\begin{theorem}
Let $\phi : (M^2 ,g) \to (N^n ,h)$ be a proper-biharmonic Riemannian immersion with constant mean curvature $H$.
If $M^2$ is a topological sphere $\sn^2$ then $M$ is pseudo-umbilical.
\end{theorem}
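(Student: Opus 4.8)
The plan is to reprise Hopf's classical strategy, with Theorem~\ref{thm3} furnishing the holomorphicity and the cited lemma of Kotani furnishing the vanishing on the sphere. Since the immersion has constant mean curvature, $|H|$ is constant, so Theorem~\ref{thm3} tells us that in every complex coordinate $z$ on $M^2$ the function $\la B(\partial z,\partial z),H\ra$ is holomorphic. Under a holomorphic change of chart $z\mapsto w$ one has $\partial z=(dw/dz)\,\partial w$, whence $\la B(\partial z,\partial z),H\ra\,dz^2=\la B(\partial w,\partial w),H\ra\,dw^2$; the local expressions therefore patch to a globally defined holomorphic quadratic differential $\Phi=\la B(\partial z,\partial z),H\ra\,dz^2$ on $M^2$, as already noted in the remark following Theorem~\ref{thm3}.

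Next I would invoke the hypothesis that $M^2$ is a topological sphere. Endowed with its conformal (complex) structure, $M^2$ is biholomorphic to $\mathbb{C}P^1$, which carries no nonzero holomorphic quadratic differential; this is exactly the content of \cite[Lemma 1 page 59]{Kotani}. Consequently $\Phi\equiv 0$, i.e.\ $\la B(\partial z,\partial z),H\ra=0$ identically, in every chart and at every point of $\sn^2$.

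It remains to convert this vanishing into pseudo-umbilicity. Writing $g=\lambda^2(dx^2+dy^2)$ and using the identity derived in the proof of Theorem~\ref{thm3}, namely $\la B(\partial z,\partial z),H\ra=\tfrac12\big(\lambda^2|H|^2-\la A_H\partial y,\partial y\ra-i\,\la A_H\partial x,\partial y\ra\big)$, the vanishing of both real and imaginary parts forces $\la A_H\partial x,\partial y\ra=0$ and $\la A_H\partial y,\partial y\ra=\lambda^2|H|^2$ at each point. Combined with the trace relation $\lambda_1+\lambda_2=2|H|^2$, which gives $\la A_H\partial x,\partial x\ra=\lambda^2|H|^2$, self-adjointness of $A_H$ then yields $A_H=|H|^2\,\Id$ everywhere, that is, $M$ is pseudo-umbilical.

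The only substantial ingredient is the nonexistence of holomorphic quadratic differentials on the sphere (Kotani's lemma); I expect the genuine holomorphicity, as opposed to a mere near-holomorphic $\bar\partial$-estimate, to be the crux, and it is precisely what the constant mean curvature assumption secures through Theorem~\ref{thm3}. The final translation between $\Phi\equiv 0$ and the equality $A_H=|H|^2\,\Id$ is routine linear algebra.
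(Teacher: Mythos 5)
Your proposal is correct and takes essentially the same route as the paper: Theorem~\ref{thm3} yields holomorphicity of $\la B(\partial z,\partial z),H\ra$ from the constancy of $|H|$, Kotani's lemma kills the resulting global holomorphic quadratic differential on the sphere, and the vanishing is converted into $A_H = |H|^2 \Id$ by the same elementary linear algebra. The only difference is cosmetic: you make explicit the patching of the local expressions into a global quadratic differential and the trace computation fixing the umbilicity factor, both of which the paper leaves implicit (the former in the remark preceding the theorem).
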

\begin{proof}
Since $\la B(\partial z,\partial z),H\ra =0$, we have
\begin{align*}
&\la B(\partial x,\partial x)- B(\partial y,\partial y),H\ra =0 \mbox{ and } \, \la B(\partial x,\partial y),H\ra =0 ,
\end{align*}
which is equivalent to
\begin{align*}
&\la A_{H}(\partial x),\partial x\ra = \la A_{H}(\partial y),\partial y\ra \mbox{ and } \, \la A_{H}(\partial x),\partial y\ra = \la A_{H}(\partial y),\partial x\ra =0 .
\end{align*}
\end{proof}

\end{document}